\documentclass[12 pt]{amsart}


\topmargin -.5in
\textheight 9in
\oddsidemargin -.25in
\evensidemargin -.25in
\textwidth 7.1in
\usepackage{bigints}
\usepackage{caption}
\usepackage{tikz-cd}
\usepackage[all]{xy}
\usepackage{enumerate}
\usepackage{amssymb}
\usepackage{float}
\usepackage{setspace}
\usepackage{comment}
\usepackage{xcolor}
\usepackage[utf8]{inputenc}
\usepackage[T1]{fontenc}
\usepackage{epsfig}
\usepackage{amsmath}
\usepackage{amsthm}
\usepackage{psfrag}
\usepackage{amsmath, amsthm, amssymb}

\hfuzz2pt 
\newcommand{\be}{\begin{equation}}
\newcommand{\ee}{\end{equation}}

%
\newtheorem{theorem}{Theorem}[section]

\newtheorem{proposition}[theorem]{Proposition}
\newtheorem{lemma}[theorem]{Lemma}

\newtheorem{definition}[theorem]{Definition}

\newtheorem{remark}[theorem]{Remark}

\newtheorem{example}[theorem]{Example}




\def\v{\upsilon}




\newfont{\graf}{eufm10}

\newcommand{\bdm}{\begin{displaymath}}
\newcommand{\edm}{\end{displaymath}}

\def\haken{\mathbin{\hbox to 6pt{%
                 \vrule height0.4pt width5pt depth0pt
                 \kern-.4pt
                 \vrule height6pt width0.4pt depth0pt\hss}}}

\usepackage{amssymb}
\usepackage{amsmath}
\usepackage{amscd}
\usepackage{graphicx}
\usepackage{latexsym}

\setlength{\textheight}{22cm} \setlength{\textwidth}{14 cm}
\calclayout\evensidemargin .6in

\topmargin.02in



\author{Eyüp Yalçınkaya}
\date{\today}
\address{\noindent University of Rochester
\newline \indent Department of Mathematics, NY, USA}
\email{eyup.yalcinkaya@rochester.edu}


\subjclass[2000]{53C38,53D05,53D15,57R17}
\keywords{(Almost) symplectic structures, $Spin(7)$ structures, Mirror Duality}

\begin{document}

\title{the 4-planes of Spin(7) manifolds}



\subjclass[2000]{53C38,53D05,53D15,57R17}
\keywords{(Almost) symplectic structures, $Spin(7)$ structures, Mirror Duality}


\maketitle

\begin{abstract}
This paper investigates the geometric structures and properties of 8-dimensional manifolds with Spin(7)-holonomy. We focus on the characterization and implications of 4-planes within these manifolds, which are endowed with an almost symplectic structure compatible with the Spin(7)-structure. We provide a detailed analysis of the differential forms defining these structures, including the Cayley 4-form and its stabilization under the Spin(7) group actions. Furthermore, we explore the concept of mirror duality within the context of exceptional holonomy, elucidating the relationship between dual structures and their topological constraints. By examining the conditions under which mirror duality arises, we enhance the understanding of the interplay between Spin(7)-structures and symplectic geometry. This endeavor contributes to our deeper understanding of the intricate symmetries and topological properties of these remarkable manifolds.

\end{abstract}


\section{Introduction}

String theory, an overarching framework seeking to reconcile general relativity and quantum mechanics, has long captivated the imagination of physicists and mathematicians alike. This theory postulates that the fundamental building blocks of the universe are not point-like particles but rather tiny, vibrating strings. To achieve its ambitious goals, string theory necessitates a backdrop of spacetime that extends beyond the conventional dimensions of our observable universe. In particular, theories with aspirations of uniting these two pillars of modern physics often invoke dimensions beyond the familiar three spatial dimensions and one-time dimensions. Within this context, 8-dimensional manifolds have emerged as a particularly captivating field of study, due to their rich mathematical properties and their potential relevance to string theory.

Given an orientable 8-dimensional manifold, one remarkable subgroup of $SO(8)$ is the one associated with the exceptional Lie group Spin(7). These manifolds are of particular interest because they naturally admit the structure of a Spin(7) manifold, a property that makes them exceptionally valuable in the study of string theory. The connection between Spin(7) manifolds and string theory is a profound one, as the geometry of these manifolds has the potential to illuminate the intricate symmetries and topological features that underlie fundamental physical processes.

In this paper, we delve into the intriguing relationship between string theory and Spin(7) manifolds, focusing on the role that a 4-frame field can play within this context. By investigating the symmetries and topological conditions associated with Spin(7) manifolds, we aim to shed light on the profound connections between geometry and theoretical physics.


Manifolds bearing these distinctive holonomy groups have earned their prominence as integral components in the realm of M-theory compactifications. These are the enigmatic, intricately folded dimensions that lie concealed at every point in the fabric of spacetime. Alongside their more explored counterparts like the 6-dimensional Calabi-Yau manifolds, and the 7-dimensional $G_2$ manifolds, the 8-dimensional $Spin(7)$ manifolds constitute a class of great interest.

Despite the extensive research devoted to Calabi-Yau manifolds, the geometric intricacies of $G_2$ and $Spin(7)$ manifolds have remained largely uncharted territory. This research is aimed at unraveling the mysteries of mirror duality within the realm of Riemannian 8-manifolds with spin structures. Then it represents a critical step in the broader quest to comprehend the profound relationships between geometry and theoretical physics, further contributing to the ongoing discourse surrounding the unification of fundamental forces, particularly within the intricate landscape of string theory.

\vspace{.1in}

\noindent In particular, we know the following theorem;

\vspace{.1in}

\noindent {\bf Theorem:}  {(\cite{SE}) \em All the odd-dimensional Stiefel-Whitney classes of a smooth, closed, connected, orientable 8-manifold with spin structure vanish.}

\vspace{.1in}

Note that a manifold $M$ with $Spin(7)$-the structure is orientable and spin. The theorem above implies that the obstructions for the existence of almost symplectic (and hence almost complex) structures on a manifold with full $Spin(7)$ holonomy vanish as well. There are inclusions between the groups 

\vspace{.1in}

$$SU(2) \longrightarrow SU(3)\longrightarrow G_2 \longrightarrow Spin(7), $$


\noindent and 

$$ SU(2) \times SU(2) \longrightarrow Sp(2) \longrightarrow SU(4) \longrightarrow Spin(7). $$

\vspace{.1in}

These are the only connected Lie subgroups of $Spin(7)$ which can be holonomy groups of Riemannian metrics on 8-manifolds. Hence the theorem above also holds for 8-manifolds with reduced holonomy groups.






\vspace{.1in} 
 Existence of 4-vector fields on given manifolds depends on its topology. The necessary conditions for Spin(7) manifolds are discussed the following sections.

Let $M$ be a Spin(7) manifold with a 4-vector field $\{u,v,y,w\}$. Almost complex structures induced by triple product define mirror pairs. 
Finally, we will prove the following;

\vspace{.1in} 
  \noindent {\bf Theorem:} {\em Let $M$ be a Spin(7) manifold. $\{u,v,y,w\}$ span $X$ as a 4-vector field. Then, 
  non-degenerate two forms $\omega_{uv}$ and $ \omega_{yw}$  generates Calabi-Yau manifolds $(N_1,J_{uv},\omega_{uv},Re\ \Omega_{uv},Im\ \Omega_{uv})$ and $(N_2,J_{yw},\omega_{yw},Re\ \Omega_{yw},Im\ \Omega_{yw})$ mirror dual of each other induced from $\Phi$. }
 
 \noindent Note that $SU(3)$-structure can be induced by Spin(7)-structure by foliation. By means of this reduction, we construct mirror Calabi-Yau submanifolds in Spin(7) manifolds.
\section{Background}

Let $(M,g)$ be a Riemannian manifold. A closed differential $p$-form $\phi$ on $M$ with $p \geq 1$ is termed a calibration if it satisfies
\[
\phi|_\xi \leq \text{vol}|_\xi
\]
for any oriented $p$-dimensional tangent plane $\xi$ in $T_xM$ at any point $x \in M$. When a Riemannian manifold $(M, g)$ has such a differential form $\phi$, it is known as a calibrated manifold. A $p$-plane $\xi$ is referred to as a $\phi$-plane or calibrated plane if
\[
\phi|_\xi = \text{vol}|_\xi,
\]
and the collection of all $\phi$-planes at a point $x$ is called the $\phi$-Grassmannian, denoted by $G(\phi_x)$. This is a subset of the oriented Grassmannian $G^+_p(M)$, which consists of all oriented $p$-planes in $T_xM$. Additionally, the collection of all $p$-planes on $M$ forms a subbundle of the oriented Grassmannian bundle $\tilde{G}_p(M)$ over $M$.

Each calibration $\phi$ defines a specific geometry for submanifolds. An oriented $p$-dimensional submanifold $X^p$ within a calibrated manifold $(M, \phi)$ is called a calibrated submanifold or a $\phi$-submanifold if each tangent space $T_xX \subset T_xM$ belongs to $G(\phi_x)$. A key result about these submanifolds, known as the fundamental theorem of calibrated geometry, is that all $\phi$-submanifolds minimize volume absolutely within their homology class.

There are numerous interesting examples of calibrations with rich geometric structures, especially in manifolds with special holonomy. Manifolds such as Kähler, Calabi--Yau, hyper-Kähler, $G_2$, or $\text{Spin}(7)$ have one or more natural calibrations.  For further examples of calibrations, one can refer to the seminal work by Harvey and Lawson \cite{Lawson} or the comprehensive book by Joyce \cite{Joyce}, which primarily explores manifolds with special holonomy. In this paper, we focus on the flat $\text{Spin}(7)$-manifold $\mathbb{R}^8$ with the Cayley calibration.

\subsection{$Spin(7$)-structures}

In this section we review the basics of $Spin(7)$ geometry. More on the subject can be found in  \cite{Fernandez}, \cite{Joyce}, \cite{Lawson} and \cite{EI}.

\vspace{.1in}

Let  $(x^1,..., x^8)$ be coordinates on $\mathbb{R}^8$. The standard Cayley 4-form on $\mathbb{R}^8$  can be written as 

\begin{align*}
\Phi_0&=dx^{1234}+dx^{1256}+dx^{1278}+dx^{1357}-dx^{1368}-dx^{1458}-dx^{1467}\\
&-dx^{2358}-dx^{2367}-dx^{2457}+dx^{2468}+dx^{3456}+dx^{3478}+dx^{5678}
\end{align*}

\vspace{.1in}

\noindent where $dx^{ijkl}=dx^i\wedge dx^j\wedge dx^k \wedge dx^l$.

\vspace{.1in}

The subgroup of $GL(8, \mathbb{R})$ that preserves $\Phi_0$ is the group $Spin(7)$. It is a 21-dimensional compact, connected and simply-connected Lie group that preserves the orientation on $\mathbb{R}^8$ and the Euclidean metric $g_0$. 
\vspace{.1in}

\begin{definition}
		A differential 4-form $\Phi$ on an oriented 8-manifold $M$ is called admissible if it can be identified with $\Phi_0$ through an oriented isomorphism between $T_pM$ and $\mathbb{R}^8$ for each point $p\in M$. 
	\end{definition}
 
 \vspace{.1in}




\begin{definition} Let $\mathcal{A}(M)$ denotes the space of admissible 4-forms on $M$. A $Spin(7)$-structure on an 8-dimensional manifold $M$ is an admissible 4-form $\Phi \in \mathcal{A}(M)$. If $M$ admits such structure, $(M, \Phi)$ is called a manifold with $Spin(7)$-structure. 
\end{definition}


\noindent Each 8-manifold with a $Spin(7)$-structure $\Phi$ is canonically equipped with a metric $g$. Hence, we can think of a $Spin(7)$-structure on $M$ as a pair $(\Phi, g)$ such that for all $p \in M$ there is an isomorphism between $T_pM$ and $\mathbb{R}^8$ which identifies $(\Phi_p, g_p)$ with $(\Phi_0, g_0)$. 

\vspace{.1in}

\noindent The existence of a $Spin(7)$-structure on an 8-dimensional manifold $M$ is equivalent to a reduction of the structure group of the tangent bundle of $M$ from $SO(8)$ to its subgroup $Spin(7)$. The following result gives the necessary and sufficient conditions so that the 8-manifold admits $Spin(7)$ structure.



\vspace{.1in}

\begin{theorem} (\cite{Lawson}, \cite{Fernandez})
	Let $M$ be a differentiable 8-manifold. $M$ admits a $Spin(7)$-structure if and only if $w_1(M)=w_2(M)=0$ and for appropriate choice of orientation on $M$ we have that 
	$$ p_1(M)^2-4p_2(M)\pm 8\chi(M)=0.$$
\end{theorem}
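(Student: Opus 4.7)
The plan is to view a $\mathrm{Spin}(7)$-structure as a successive reduction of the structure group of the tangent bundle along the chain $SO(8) \supset \mathrm{Spin}(8) \supset \mathrm{Spin}(7)$ and analyze the obstruction at each stage using standard obstruction theory.

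The first two obstructions are classical. For $M$ to admit an $SO(8)$-structure, $M$ must be orientable, i.e.\ $w_1(M)=0$. To further lift the $SO(8)$-bundle of oriented frames to a principal $\mathrm{Spin}(8)$-bundle along the double cover $\mathrm{Spin}(8)\to SO(8)$, the obstruction is precisely $w_2(M)$. Hence $w_1(M)=w_2(M)=0$ is the exact condition for the existence of a spin structure on $M$, giving the first two conditions of the theorem.

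The heart of the argument is the further reduction $\mathrm{Spin}(8)\to\mathrm{Spin}(7)$. The essential representation-theoretic input, which I would invoke as a black box, is that via triality $\mathrm{Spin}(7)$ sits inside $\mathrm{Spin}(8)$ as the stabilizer of a unit vector in one of the two real $8$-dimensional half-spin representations $\Delta^{\pm}_8$. Consequently $\mathrm{Spin}(8)/\mathrm{Spin}(7) \cong S^7$, so a $\mathrm{Spin}(7)$-reduction of the principal $\mathrm{Spin}(8)$-bundle $P$ is exactly a nowhere-vanishing global section of the associated real rank-$8$ half-spin bundle $S^{\pm}(M) = P \times_{\mathrm{Spin}(8)} \Delta^{\pm}_8$. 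Since $\dim M = \mathrm{rank}\, S^{\pm}(M) = 8$, the single primary obstruction to such a section is the Euler class $e(S^{\pm}(M)) \in H^8(M;\mathbb{Z})$, and the choice of $S^{+}$ versus $S^{-}$ corresponds to the two possible orientations.

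The main technical step, and the principal obstacle, is to compute this Euler class in terms of the characteristic classes of $TM$. Applying the splitting principle, I would write the formal Pontryagin roots of $TM$ as $x_1^2,\dots,x_4^2$, so that $p_1(M) = \sum x_i^2$, $p_2(M) = \sum_{i<j} x_i^2 x_j^2$, and $\chi(M) = x_1 x_2 x_3 x_4$. The (complexified) Chern roots of $S^{\pm}(M)$ are then the half-sums
\[
\tfrac{1}{2}\bigl(\varepsilon_1 x_1 + \varepsilon_2 x_2 + \varepsilon_3 x_3 + \varepsilon_4 x_4\bigr), \qquad \varepsilon_i \in \{\pm 1\},
\]
ranging over those sign patterns with $\prod_i \varepsilon_i = +1$ for $S^{+}$ and $\prod_i \varepsilon_i = -1$ for $S^{-}$, and the Euler class is the product of these eight roots. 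A careful expansion of this product and repeated use of the identities between elementary symmetric polynomials in the $x_i$ and the characteristic classes $p_1, p_2, \chi$ shows that $e(S^{\pm}(M))$ is, up to a universal nonzero rational factor, equal to $p_1(M)^2 - 4 p_2(M) \pm 8 \chi(M)$. Vanishing of this integer expression is therefore equivalent to the existence of a nowhere-vanishing half-spinor, which by the discussion above is equivalent to the existence of a $\mathrm{Spin}(7)$-structure of the corresponding orientation. Combined with the necessary conditions $w_1(M)=w_2(M)=0$ this completes the proof.
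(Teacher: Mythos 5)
The paper does not actually prove this theorem; it is quoted from the literature (the citations to Harvey--Lawson and Fern\'andez), so there is no in-paper argument to compare against. Your proposal is the standard proof of this classical criterion (it is essentially the argument in Lawson--Michelsohn's \emph{Spin Geometry} and in Gray--Green), and it is correct in outline: $w_1=w_2=0$ is exactly the spin condition; via triality $\mathrm{Spin}(7)$ is the stabilizer of a unit half-spinor, so $\mathrm{Spin}(8)/\mathrm{Spin}(7)\cong S^7$ and a $\mathrm{Spin}(7)$-reduction is a nowhere-zero section of the rank-$8$ bundle $S^{\pm}(M)$; since the fibre sphere is $S^7$ and $\dim M=8$, the Euler class is the unique obstruction. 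Your Euler class computation, though only sketched, does close up: with Pontryagin roots $x_1,\dots,x_4$ the four positive weights of $\Delta^{+}_8$ multiply to
\[
\tfrac{1}{16}\,(x_1+x_2+x_3+x_4)(x_1+x_2-x_3-x_4)(x_1-x_2+x_3-x_4)(x_1-x_2-x_3+x_4)
=\tfrac{1}{16}\bigl(p_1^2-4p_2+8e\bigr),
\]
so $e\bigl(S^{\pm}(M)\bigr)=\tfrac{1}{16}\bigl(p_1(M)^2-4p_2(M)\pm 8\chi(M)\bigr)$, and its vanishing is the stated condition. Two small points you should make explicit: the statement tacitly requires $M$ closed and oriented so that these characteristic numbers are well defined and so that the Euler class is the \emph{only} obstruction on an $8$-complex; and the $\pm$/orientation bookkeeping comes from the fact that reversing the orientation interchanges $\Delta^{+}$ and $\Delta^{-}$ while $\chi$ is orientation-independent but $p_1^2$ and $p_2$ evaluated on the fundamental class change sign. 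Neither point affects the correctness of the approach.
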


\vspace{.1in}


\noindent Furthermore, if $\nabla \Phi=0$, where $\nabla$ is the Riemannian connection of $g$, then $\text{Hol}(M) \subseteq Spin(7)$, and $M$ is called a $Spin(7)$-manifold. All  $Spin(7)$ manifolds are Ricci flat.

\subsection{Decomposition of $\bigwedge^*(M)$ into irreducible Spin(7)-representations} \label{decom}

Let $(M,g,\Phi)$ be a Spin(7) manifold. The action of Spin(7) on the tangent space gives an action of $Spin(7)$ on the spaces of differential forms, 
$\Lambda^k(M)$, and so the exterior algebra splits orthogonally
into components, where $\Lambda^k_l$ corresponds to an irreducible
representation of $Spin(7)$ of dimension $l$:

 \vspace{.1in}

$$\Lambda^1(M)=\Lambda^1_8, \quad \Lambda^2(M) = \Lambda^2_7\oplus
\Lambda^2_{21}, \quad
\Lambda^3(M)=\Lambda^3_8\oplus\Lambda^3_{48}, $$ $$
\Lambda^4(M)=\Lambda^4_+(M)\oplus \Lambda^4_-(M), \quad
\Lambda^4_+(M)=\Lambda^4_1\oplus\Lambda^4_7\oplus\Lambda^4_{27},
\quad \Lambda^4_-=\Lambda^4_{35} $$ $$
\Lambda^5(M)=\Lambda^5_8\oplus\Lambda^5_{48} \quad \Lambda^6(M)=\Lambda^6_7\oplus\Lambda^6_{21},  \quad  \Lambda^7(M)=\Lambda^7_{8};$$

 \vspace{.1in}

\noindent where $\Lambda^4_{\pm}(M)$ are the $\pm$-eigenspaces of $*$ on $\Lambda^4(M)$ and
 
  \vspace{.1in}

 $$ \Lambda^2_7 = \{\alpha \in \Lambda^2(M) |
*(\alpha\wedge\Phi)=3\alpha\}, \quad \Lambda^2_{21} = \{\alpha \in
\Lambda^2(M)|*(\alpha\wedge\Phi)=-\alpha\} ,$$ $$ \Lambda^3_8 =
\{*(\beta\wedge\Phi) | \beta \in \Lambda^1(M)\}, \quad
\Lambda^3_{48} = \{\gamma \in \Lambda^3(M) | \gamma\wedge\Phi=0\},$$ $$\quad \Lambda^4_1 = \{f\Phi | f\in {\mathcal F(M)}\} $$ 

 \vspace{.1in}

The Hodge star
$*$ gives an isometry between $\Lambda^k_l$ and $\Lambda^{8-k}_l$.

 \vspace{.1in}


 \subsection{Almost symplectic structures and $Spin(7)$-structures}

 \vspace{.1in}
 
\noindent An almost symplectic manifold  $M$ is a n-dimensional manifold (n=2m) with a non degenerate 2-form $\omega$. If besides, $\omega$ is closed then $M$ is called a symplectic manifold. An almost-symplectic structure defines an $Sp(m, \mathbb{R})$ structure. A necessary and sufficient condition for the existence of an almost-symplectic structure on $M$ is the reduction of the structure group of the tangent bundle to the unitary group $U(m)$. It is, therefore, necessary that all odd-dimensional Stiefel-Whitney classes of $M$ vanish.
\vspace{.05in}


\noindent Next, we discussed a special class of $Spin(7)$ manifolds that admit an almost complex structure and show how it is related to the $Spin(7)$-structure. 

 \vspace{.1in}
 
Let $(M,\Phi)$ be a $Spin(7)$ manifold (or more generally manifold $Spin(7)$ structure) admitting a non-vanishing 2-plane field $\Lambda=\{u, v\} \in TM$. In \cite{thomas1969vector}, E. Thomas showed that the Euler characteristic $\chi(M)=0$ and the signature $\sigma(M)\equiv 0$ (mod 4) provides the necessary and sufficient conditions for the existence of a 2-plane field on an 8-manifold.

	\begin{definition}\label{def:cross}
		
	Let $(M,\Phi)$ be a $Spin(7)$ manifold. Then $J_{u,v} (z)= u \times v \times z$ is the triple cross product defined by the identity:
	\begin{equation}	< J_{u,v} (z),w > =  \Phi (u,v,z,w).
	\end{equation}
	where $\times$ is the octanionic cross product in $\mathbb{O}$.
\end{definition}

\begin{theorem}\cite{SE} Let $(M,\Phi)$ be a $Spin(7)$ manifold with a non-vanishing oriented 2-plane field. Then, $J_{u,v}(z)=u \times v \times z $ defines an almost complex structure on $M$ compatible with the $Spin(7)$ structure.

\end{theorem}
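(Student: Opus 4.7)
The plan is to exploit the transitivity of the $Spin(7)$ action on oriented orthonormal $2$-frames in $\mathbb{R}^8$ to reduce the claim to a finite pointwise calculation in the model $(\mathbb{R}^8,\Phi_0,e_1,e_2)$. After pointwise Gram--Schmidt I may assume $\{u,v\}$ is oriented orthonormal; this only rescales $J_{u,v}$ by a positive scalar at each point, so the resulting tensor depends only on the oriented $2$-plane. A dimension count gives that the $Spin(7)$-orbit of such a frame has dimension $21-8=13=\dim V_2^+(\mathbb{R}^8)$, so the action is transitive with stabilizer $SU(3)$ (the stabilizer of $e_1$ in $Spin(7)$ is $G_2$, and the stabilizer of $e_2\in S^6$ in $G_2$ is $SU(3)$). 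At any $p\in M$ I may therefore choose an oriented linear isometry $T_pM\cong\mathbb{R}^8$ sending $(\Phi_p,u_p,v_p)$ to $(\Phi_0,e_1,e_2)$ and verify the theorem entirely in that model.

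In the model, only the monomials of $\Phi_0$ containing both $dx^1$ and $dx^2$, namely $dx^{1234}+dx^{1256}+dx^{1278}$, contribute to $\Phi_0(e_1,e_2,z,w)$. Reading off the pairings via $\langle J_{e_1,e_2}(z),w\rangle=\Phi_0(e_1,e_2,z,w)$, one obtains $J_{e_1,e_2}(e_3)=e_4$, $J_{e_1,e_2}(e_5)=e_6$, $J_{e_1,e_2}(e_7)=e_8$ together with the negatives on $e_4,e_6,e_8$, while antisymmetry of $\Phi_0$ in the first and third arguments forces $J_{e_1,e_2}(e_1)=J_{e_1,e_2}(e_2)=0$. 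Hence $J_{u,v}^2=-\mathrm{Id}$ on the six-dimensional orthogonal complement of $\mathrm{span}(u,v)$. Skew-symmetry of $J_{u,v}$ with respect to $g$ is immediate from the antisymmetry of $\Phi$ in its last two slots, and extending on the $2$-plane by the canonical oriented rotation $u\mapsto v$, $v\mapsto -u$ produces a genuine almost complex structure on $T_pM$; metric compatibility follows from the combination of skew-symmetry with $J^2=-\mathrm{Id}$.

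Compatibility with the $Spin(7)$-structure is a direct consequence of $Spin(7)$-equivariance of the construction: because $\Phi$ is preserved by $Spin(7)$, the pointwise almost complex structure $J_{u,v}$ is naturally associated to the pair $(\Phi,\{u,v\})$, so the reduction of structure group it defines sits inside the ambient $Spin(7)$-subbundle of the frame bundle. The principal obstacle is the behavior of $J_{u,v}$ on $\mathrm{span}(u,v)$: the defining identity forces $J_{u,v}(u)=J_{u,v}(v)=0$ by antisymmetry of $\Phi$, so the raw definition does not give an endomorphism with $J^2=-\mathrm{Id}$ on all of $T_pM$, and one must read the theorem as implicitly supplying the canonical extension above on the $2$-plane. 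A secondary point requiring care is that this extension depends only on the oriented plane and not on the chosen orthonormal basis: $SO(2)$-invariance of both the Cayley pairing in the first two slots and of the planar rotation $u\mapsto v$, $v\mapsto -u$ delivers this, so $J_{u,v}$ descends to a well-defined smooth tensor on $M$ whenever the oriented $2$-plane field is smooth.
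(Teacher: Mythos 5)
Your proposal is correct, and it reaches the conclusion by a genuinely different route from the paper. The paper argues abstractly: it writes $\langle J^2(z_i),z_j\rangle=\Phi(u,v,J(z_i),z_j)=-\Phi(u,v,z_j,J(z_i))=-\langle J(z_j),J(z_i)\rangle=-\delta_{ij}$, invoking the antisymmetry of $\Phi$ together with the assertion that ``$J$ is orthogonal.'' That orthogonality (equivalently $|u\times v\times z|=|u\wedge v\wedge z|$) is exactly where the real content sits, and the paper does not derive it. You instead use transitivity of $Spin(7)$ on orthonormal $2$-frames (with stabilizer $SU(3)$, dimension count $21-8=13$) to reduce to $(\Phi_0,e_1,e_2)$ and read off $J(e_3)=e_4$, $J(e_5)=e_6$, $J(e_7)=e_8$ from $\iota_{e_2}\iota_{e_1}\Phi_0=dx^{34}+dx^{56}+dx^{78}$; this makes both orthogonality and $J^2=-\mathrm{id}$ manifest rather than assumed. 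Your approach is longer but self-contained modulo the standard homogeneity facts, whereas the paper's is shorter but leans on an unproven property of the triple cross product.

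You also correctly isolate a gap that the paper's own proof glosses over: since $\Phi$ is alternating, the defining identity forces $J_{u,v}(u)=J_{u,v}(v)=0$, so the raw $J_{u,v}$ squares to $-\mathrm{id}$ only on the $6$-plane bundle $\{u,v\}^{\perp}$ and degenerates on $\mathrm{span}(u,v)$; the paper's computation $-\langle J(z_j),J(z_i)\rangle=-\delta_{ij}$ simply fails there. Your repair --- extending by the canonical $90^{\circ}$ rotation $u\mapsto v$, $v\mapsto -u$ on the oriented plane, and checking $SO(2)$-invariance of both pieces so the tensor depends only on the oriented plane field --- is the natural way to make the theorem as stated literally true, and is consistent with how the analogous $G_2$ construction of Akbulut--Salur is handled on the complementary $6$-plane bundle.
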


\begin{proof} 
	Let  $\{u, v\} \in TM$ be two vectors generating the non-vanishing oriented 2-plane field. $J(z)$ is well defined since by Equation (1), $<J(z),w>= \Phi (u,v,z,w).$

	\vspace{.1in}
	
	Next, we show $J^2(z)= - id$. This can be done using the properties of the $Spin(7)$-structure on $M.$  Let $z_i, z_j  \in TM$, 
	
	\vspace{.1in}

	\noindent Then 
	
	\vspace{.1in}	
	$<u \times v \times (u \times v \times \ z_i), z_j > = \Phi (u,v,(u\times v \times z_i), z_j) $
	
	\hspace{1.68in} $= -\Phi (u,v,z_j,(u\times v \times z_i)) $
	
	\hspace{1.68in} $= - <u \times v \times z_j, u \times v \times z_i> $
	
	\hspace{1.68in} $ =-\delta_{ij} $
	
	\vspace{.1in}	
	
	The last equality holds since the map $J$ is orthogonal.  Note that the map $J$ only depends on the oriented 2-plane $ \Lambda=\{u, v\}$.
	
\end{proof}

\vspace{0.04in}
As a result, Spin(7) manifolds inherently possess an almost complex structure as well as an almost symplectic structure, characterized by the presence of 2-plane fields. By means of this result, 4-frame on Spin(7)-manifold yields dual Calabi-Yau manifolds depending on position of 2-frames according to each other.



	
	
	
	
	
	
	


  \section{Mirror Duality on Exceptional Holonomy}


In the papers, \cite{akbulut2006mirror}, \cite{akbulutsalur} and \cite{gloversalur}, it is shown that the rich geometric structures of a $G_2$ manifold $N$ with $2$-plane fields provide complex and symplectic structures to certain 6-dimensional subbundles of $T(N)$. Using the 2-plane fields, the mathematical definition of "mirror symmetry" for Calabi-Yau and $G_2$ manifolds can be introduced. More specifically, one can assign a $G_2$ manifold $(N,\varphi, \Lambda)$, with the calibration 3-form $\varphi$ and an oriented $2$-plane field $\Lambda$, a pair of parametrized tangent bundle valued 2 and 3-forms of $N$. These
forms can then be used to define different complex and symplectic structures on certain 6-dimensional subbundles of $T(N)$. When
these bundles are integrated they give mirror CY manifolds. This is one way of explaining the duality between the symplectic and complex
structures on the CY 3-folds inside of a $G_2$ manifold.

As one might expect, understanding 4-vector fields on a Spin(7) manifold might help us to understand the properties of the Spin(7) metric and mirror duality.

\subsection{Cayley-free embeddings on Spin(7) manifold}
One can similarly construct these structures and define mirror dual Calabi Yau manifolds inside a $Spin(7)$ manifold which admits an almost complex structure.

\noindent Let $(M, \Phi)$ be a calibrated manifold. A p-plane $\xi$ is said to be tangential to a submanifold $X \subset M$ if
$span\xi \in T_xX$ for some $x \in X$. $\Phi$-free embedding is given by Harvey and Lawson \cite{harvey2009}. 

\begin{definition}A closed submanifold $X \subset M$ is called $\Phi$-free if there are no $\Phi$-planes $\xi \in G(\Phi)$ which are
tangential to $X$. On a Spin(7)-manifold with Cayley calibration $\Phi$, we call these submanifolds Cayley-free.
\end{definition}

\begin{definition} The free dimension of a calibrated manifold $(M, \Phi)$, denoted by $fd(\Phi)$, is the maximum
dimension of a linear subspace in $T_xM$ for $x \in M$ which contains no $\Phi$-planes. Subspaces with this property are called $\Phi$-free.
\end{definition}

\noindent As an example, if $(M, \omega)$ is a
K\"ahler manifold with real dimension $2n$, then  $fd(\omega) = n$. For a Spin(7)-manifold with Cayley calibration $\Phi$ Unal \cite{Unal}  showed the following result.

\begin{theorem} The free dimension of $\Phi$, $fd(\Phi)$ is equal to 4.
    
\end{theorem}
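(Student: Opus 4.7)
The plan is to prove the two inequalities $fd(\Phi) \geq 4$ and $fd(\Phi) \leq 4$ separately, working in the flat model $(\mathbb{R}^8, \Phi_0)$ by $Spin(7)$-equivariance. First I would establish the lower bound by exhibiting an explicit $4$-dimensional $\Phi$-free subspace: take $W_0 = \text{span}(e_1, e_2, e_3, e_5)$. Inspection of the fourteen monomials in the formula for $\Phi_0$ shows that $dx^{1235}$ is not among them, so $\Phi_0|_{W_0}=0$; in particular $W_0$ itself is not Cayley. Since the only $4$-plane contained in $W_0$ is $W_0$, it contains no Cayley plane and therefore $fd(\Phi) \geq 4$.

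For the upper bound I would show that every $5$-dimensional subspace $V \subset \mathbb{R}^8$ contains at least one Cayley $4$-plane. The key tool is the triple-cross-product construction already present in Definition~\ref{def:cross}: using the identity $\Phi(a,b,c,d) = \langle a \times b \times c,\, d\rangle$ together with the antisymmetry of $\Phi$, for every orthonormal $3$-frame $(a,b,c)$ in $\mathbb{R}^8$ the vector $d := a \times b \times c$ is a unit vector orthogonal to each of $a,b,c$, and $\Phi(a,b,c,d) = |d|^2 = 1$. Hence $\text{span}(a,b,c,d)$ is a Cayley $4$-plane; equivalently, every $3$-dimensional subspace of $\mathbb{R}^8$ extends to a Cayley $4$-plane.

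To finish I would invoke the self-duality $*\Phi = \Phi$, which is built into the $Spin(7)$ decomposition recalled above since $\Phi \in \Lambda^4_1 \subset \Lambda^4_+$. For a positively oriented orthonormal basis $(v_1,\dots,v_8)$ of $\mathbb{R}^8$ one has $(*\Phi)(v_1,v_2,v_3,v_4) = \Phi(v_5,v_6,v_7,v_8)$, so $*\Phi = \Phi$ implies that a $4$-plane $\tilde W$ is Cayley if and only if its orthogonal complement $\tilde W^\perp$ is Cayley (with the induced orientation). Now, given any $5$-plane $V$, set $U := V^\perp$, pick an orthonormal basis of the $3$-plane $U$, and let $\tilde W \supset U$ be the Cayley $4$-plane produced by the previous paragraph. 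Then $W := \tilde W^\perp$ is Cayley and $W \subset U^\perp = V$, producing the required Cayley $4$-plane inside $V$.

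The main obstacle is conceptual rather than computational: the proof hinges on combining two standard facts about the Cayley form, namely the identity $\Phi(a,b,c,d) = \langle a \times b \times c,\, d\rangle$ and the self-duality $*\Phi = \Phi$. Once these are in place, the duality swap that turns ``$V$ contains a Cayley plane'' into ``$V^\perp$ is contained in a Cayley plane'' does all the work, and the only real subtlety is the orientation bookkeeping needed so that $\tilde W$ and $\tilde W^\perp$ are simultaneously positively calibrated, which is immediate from $*\Phi = \Phi$.
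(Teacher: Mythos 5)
The paper does not actually prove this theorem: it is quoted from Unal \cite{Unal} with no argument supplied, so there is no in-paper proof to compare against. Judged on its own, your proof is correct and follows the standard Harvey--Lawson line of reasoning. The lower bound via $W_0=\mathrm{span}(e_1,e_2,e_3,e_5)$ is fine, since the coefficient of $dx^{1235}$ in $\Phi_0$ is $0$ and the only $4$-plane inside a $4$-dimensional subspace is the subspace itself. The upper bound via the duality swap (extend the $3$-plane $V^\perp$ to a Cayley $4$-plane $\tilde W$, then use $*\Phi=\Phi$ to conclude $\tilde W^\perp\subset V$ is Cayley) is clean and correct, and the orientation bookkeeping is handled properly by the self-duality. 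The one point you should not present as following merely from "the identity $\Phi(a,b,c,d)=\langle a\times b\times c, d\rangle$ together with antisymmetry" is the claim that $d=a\times b\times c$ has unit norm for an orthonormal triple $(a,b,c)$: orthogonality to $a,b,c$ does follow from antisymmetry, but $|a\times b\times c|=|a\wedge b\wedge c|$ is a separate octonionic identity (equivalently, that $\Phi$ has comass one and every $3$-plane lies in a unique Cayley $4$-plane, Harvey--Lawson). Cite that fact explicitly rather than implying it is a formal consequence; with that reference in place the argument is complete.
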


\noindent Since free dimension $fd(\Phi)=4,$ One can find non-vanishing 4-frame $V= \{u,v,y,w\}$ on Spin(7) manifold such that $\Phi\lvert_V=0.$  

 \subsection{Characteristic Classes of $G_4\mathbb{R}^8$}

Let $E = E(4, 8)$ and $F = F(4, 8)$ over $G^+_4 \mathbb{R}^8$ be the canonical vector bundles with
fibers generated by vectors of the subspaces and the vectors orthogonal to the subspaces, respectively. We have $$p_t(G^+_4 \mathbb{R}^8) = 1 + 3t^4 + 4t^8 + 3t^{12} + t^{16}.$$

The given polynomial yields the Betti numbers of $G^+_4 \mathbb{R}^8.$ The class of Cayley embeddings in $\mathbb{R}^8$ is a submanifold in Grassmannian manifold $G_4^+\mathbb{R}^8$ \cite{SZ}. 

\vspace{0.2in}

\begin{center}
    \includegraphics[height=70mm,width=110mm,]{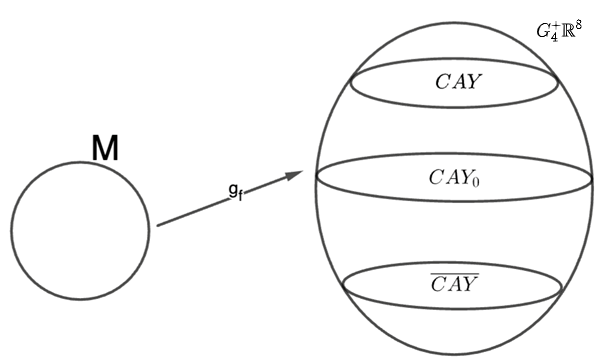}\captionof{figure}{}
    \label{fig:my_label}
\end{center}

\vspace{0.2in}

Let $f: M^4 \rightarrow \mathbb{R}^8$ be an embedding of closed oriented 4-manifold $M^4$ into $\mathbb{R}^8.$ Then the Gauss map of $f$ is written as $g_f: M^4 \rightarrow G_4^+(\mathbb{R}^8)$ shown in figure \ref{fig:my_label}.
\vspace{0.1in}

\noindent The tangent space of Spin(7) manifold is pointwisely isomorphic to  $\mathbb{R}^8.$
Let $CAY$,  $ CAY_0$  denote Cayley embeddings and Cayley-free embeddings respectively in Grassmannian $G_4\mathbb{R}^8$ where $g_f$ is the Gauss map. We can consider \( G_{f_0}(\mathbb{M}^4) \) as an embedded closed submanifold in \( G_4^+(\mathbb{R}^8) \) intersecting with $CAY$, and  $ CAY_0$ transversally in \( G_4^+(\mathbb{R}^8) \).

\begin{theorem} \cite{SZ} Let $E = E(4, 8)$ and $F = F(4, 8)$ over $G^+_4 \mathbb{R}^8$ be the canonical vector bundles with
fibers generated by vectors of the subspaces and the vectors orthogonal to the subspaces, respectively. Then
we have: \begin{enumerate}

\item $[G^+_4 \mathbb{R}^7]$, $[G^+_3 \mathbb{R}^7]$ and $[CAY]$ are generators of $H^{12}(G^+_4 \mathbb{R}^8; R)$
\item  $e(E), e(F)$ and $\frac{1}{2} (p_1(E) + e(E) - e(F))$ are generators of $H^4(G^+_4 \mathbb{R}^8; \mathbb{R})$ and their Poincaré duals are $[G^+_4\mathbb{R}^7], [G^+_3 \mathbb{R}^7]$, $[CAY] + [G^+_4 \mathbb{R}^7] - [G^+_3 \mathbb{R}^7] \in H_{12}(G^+_4 \mathbb{R}^8; \mathbb{R}),$ respectively.
\item  $[G^+_2 \mathbb{R}^4], [G^+_1 \mathbb{R}^5]$, and $[G^+_4 \mathbb{R}^5]$ are generators of $H_4(G^+_4 \mathbb{R}^8; \mathbb{R})$ and the following table shows the value of integration of certain characteristic classes on these generators.
\end{enumerate} 

\begin{table}[h]
    \centering
    \begin{tabular}{|c|c|c|c|}
        \hline
         & $G^+_2 \mathbb{R}^4$ & $G^+_1 \mathbb{R}^5$ & $G^+_4 \mathbb{R}^5$ \\
        \hline
        e(E) & 0 & 0 & 2 \\
        \hline
        e(F) & 0 & 2 & 0 \\
        \hline
        $p_1(E)$ & 2 & 0 & 0 \\
        \hline
    \end{tabular}
    \label{tab:my_table}
\end{table}
\end{theorem}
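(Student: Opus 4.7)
The plan is to use the Borel / splitting-principle description of $H^*(G^+_4\mathbb{R}^8;\mathbb{R})$ together with explicit intersection-pairing computations between the named sub-Grassmannians and the natural degree-four characteristic classes. The starting point is the Poincaré polynomial $1+3t^4+4t^8+3t^{12}+t^{16}$ recorded just above, which fixes $\dim H^4=\dim H_{12}=3$. Since $E\oplus F$ is the trivial rank-8 bundle, the degree-four classes reduce to the three obvious candidates $e(E)$, $e(F)$, $p_1(E)$; the real content of the theorem is that these three form a basis and that the listed twelve-dimensional loci are their Poincaré duals.

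For the two easy dualities in (2), I would argue via vanishing loci. A generic section of $E$ is produced by orthogonally projecting a fixed unit vector $v\in\mathbb{R}^8$ onto each tautological $W$; its transverse zero locus is $\{W:W\subset v^\perp\}\cong G^+_4\mathbb{R}^7$, so $\mathrm{PD}\,e(E)=[G^+_4\mathbb{R}^7]$. Dually, a generic section of $F$ vanishes on $\{W:v\in W\}\cong G^+_3(\mathbb{R}^8/\langle v\rangle)=G^+_3\mathbb{R}^7$, giving $\mathrm{PD}\,e(F)=[G^+_3\mathbb{R}^7]$.

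For the integration table in (3), I would restrict $E$ and $F$ to each of the three 4-dimensional sub-Grassmannians and evaluate the Euler numbers of the resulting rank-4 bundles. Under $G^+_2\mathbb{R}^4\hookrightarrow G^+_4\mathbb{R}^8$, $W\mapsto W\oplus W_0$ for a fixed oriented 2-plane $W_0$ in a complementary $\mathbb{R}^4$, one obtains $E|\cong\tau\oplus\underline{\mathbb{R}}^2$ and $F|\cong\tau^\perp\oplus\underline{\mathbb{R}}^2$, with $\tau$ the tautological bundle on $G^+_2\mathbb{R}^4\cong S^2\times S^2$; thus $e(E)|=e(F)|=0$ while $p_1(E)|=p_1(\tau)=e(\tau)^2$ integrates to $2$. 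For $G^+_1\mathbb{R}^5\cong S^4\hookrightarrow G^+_4\mathbb{R}^8$, $\ell\mapsto\ell\oplus(\mathbb{R}^5)^\perp$, the oriented tautological line over $S^4$ is trivialized by the position vector, so $E|$ is trivial of rank 4 and $e(E)|=p_1(E)|=0$, while $F|$ is the orthogonal-complement-to-$\ell$ bundle in $\mathbb{R}^5$, which is $TS^4$, so $\int e(F)=\chi(S^4)=2$. For $G^+_4\mathbb{R}^5\cong S^4$ embedded via $\mathbb{R}^5\subset\mathbb{R}^8$, the identification $W\leftrightarrow W^\perp$ inside $\mathbb{R}^5$ gives $E|\cong TS^4$, hence $\int e(E)=2$ and $\int p_1(E)=0$, and $F|$ is trivial so $\int e(F)=0$. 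This reproduces the stated table, and the non-singularity of the resulting $3\times 3$ matrix simultaneously certifies that $\{e(E),e(F),p_1(E)\}$ is a basis of $H^4$ and $\{[G^+_2\mathbb{R}^4],[G^+_1\mathbb{R}^5],[G^+_4\mathbb{R}^5]\}$ is a basis of $H_4$.

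The hard part is the Cayley class. Since $H^4$ is three-dimensional, I would write $\mathrm{PD}[CAY]=\alpha\,e(E)+\beta\,e(F)+\gamma\,p_1(E)$ and determine $(\alpha,\beta,\gamma)$ by intersecting $CAY$ with the three 4-dimensional generators just analyzed. The set $CAY\subset G^+_4\mathbb{R}^8$ is the $Spin(7)$-orbit of a fixed Cayley 4-plane, a compact homogeneous space $Spin(7)/K$ of codimension 4, so the restrictions of $E$ and $F$ to $CAY$ are homogeneous bundles whose Euler numbers can be read from the representation theory of $K$. Equivalently, each intersection number is computable by a direct perturbation: the Cayley form $\Phi_0$ restricts to each 4-dim sub-Grassmannian as a smooth function whose preimage of the maximum value is the intersection with $CAY$, and a transverse count gives the required number. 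Solving the resulting $3\times 3$ system determines $(\alpha,\beta,\gamma)$ and, after combining with the already-computed duals of $e(E)$ and $e(F)$, produces the stated identity $\mathrm{PD}([CAY]+[G^+_4\mathbb{R}^7]-[G^+_3\mathbb{R}^7])=\tfrac{1}{2}(p_1(E)+e(E)-e(F))$. This also yields part (1), since the three twelve-cycles are independent precisely because their Poincaré duals span $H^4$. The principal obstacle is this Cayley intersection computation: it demands an explicit homogeneous-space description of $CAY=Spin(7)/K$ and a careful transverse-intersection analysis against three different embedded $S^4$ or $S^2\times S^2$ models; a possibly cleaner alternative is to realize the Cayley form as a global section of $\Lambda^4 E^*$ on the Grassmannian and extract $\mathrm{PD}[CAY]$ as a localized Euler class of that section.
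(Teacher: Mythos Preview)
The paper does not prove this theorem at all: it is quoted from \cite{SZ} and used as input for the subsequent lemma, so there is no ``paper's own proof'' to compare against.

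On the merits of your proposal: the arguments for the non-Cayley parts are correct and are the standard ones. Your zero-locus identifications $\mathrm{PD}\,e(E)=[G^+_4\mathbb{R}^7]$ and $\mathrm{PD}\,e(F)=[G^+_3\mathbb{R}^7]$ are exactly right, and the three restriction computations producing the integration table are accurate (in particular $p_1(\tau)=e(\tau)^2$ on $G^+_2\mathbb{R}^4\cong S^2\times S^2$ does integrate to $2$, and the identifications $E|_{G^+_4\mathbb{R}^5}\cong TS^4$, $F|_{G^+_1\mathbb{R}^5}\cong TS^4$ are the correct ones). The nonsingularity of the pairing matrix then gives the basis statements in (2) and (3) as you say.

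The genuine gap is the Cayley class, and you flag this yourself. Two concrete issues. First, you set up the linear system for $(\alpha,\beta,\gamma)$ but never compute the three intersection numbers $[CAY]\cdot[G^+_2\mathbb{R}^4]$, $[CAY]\cdot[G^+_1\mathbb{R}^5]$, $[CAY]\cdot[G^+_4\mathbb{R}^5]$; this is precisely the nontrivial content of the theorem and cannot be left as ``a transverse count gives the required number''. Second, your proposed shortcut of viewing $\Phi_0$ as a section of $\Lambda^4 E^*$ and taking a localized Euler class cannot work as stated: $\Lambda^4 E^*$ is a line bundle, whereas $CAY$ has codimension $4$ in $G^+_4\mathbb{R}^8$, so its class is not the Euler class of that section (and in any case the section is maximal, not zero, along $CAY$). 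One has to work instead with the rank-$4$ bundle measuring the failure of a $4$-plane to be Cayley (equivalently, the normal bundle of $CAY\cong Spin(7)/K$ inside the Grassmannian), which is what the argument in \cite{SZ} effectively does. Until those three intersection numbers are actually produced, the $[CAY]$ portion of (1) and (2) remains unproved in your outline.
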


As a result of the theorem, we obtain that the Poincaré dual of $[CAY_0]$ is equal to $e(E)+e(F)$ since $[CAY_0]= [G^+_4 \mathbb{R}^7] + [G^+_3 \mathbb{R}^7].$

Since $f$ is an embedding into $\mathbb{R}^8,$ the following holds; 

$$ [g_f(M)]=\frac{1}{2}\chi_M [G^+_4 \mathbb{R}^5] + \frac{3}{2}\tau_M[G^+_2 \mathbb{R}^4].$$

\begin{lemma} The intersection number between classes $[CAY_0]$ and $ [g_f(M)]$ is 
$$ [g_f(M)] \cdot [CAY_0] = \chi_M $$
\end{lemma}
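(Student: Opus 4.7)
The plan is to reduce the intersection number $[g_f(M)]\cdot[CAY_0]$ to a cohomological pairing via Poincar\'e duality and then read off the answer from the data collected in the preceding theorem. The identity $[CAY_0]=[G^+_4\mathbb{R}^7]+[G^+_3\mathbb{R}^7]$ in $H_{12}(G^+_4\mathbb{R}^8;\mathbb{R})$, combined with part (2) of that theorem --- which identifies $e(E)$ and $e(F)$ respectively as the Poincar\'e duals of $[G^+_4\mathbb{R}^7]$ and $[G^+_3\mathbb{R}^7]$ --- gives
$$\mathrm{PD}([CAY_0])=e(E)+e(F)\in H^{4}(G^+_4\mathbb{R}^8;\mathbb{R}).$$
Since $g_f(M)$ can be assumed transverse to $CAY_0$ inside the $16$-dimensional Grassmannian after a $C^\infty$-small perturbation of $f$, the geometric intersection number coincides with the Kronecker pairing
$$[g_f(M)]\cdot[CAY_0]=\langle e(E)+e(F),\,[g_f(M)]\rangle.$$

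Next, I would substitute the expansion
$$[g_f(M)]=\tfrac{1}{2}\chi_M[G^+_4\mathbb{R}^5]+\tfrac{3}{2}\tau_M[G^+_2\mathbb{R}^4]$$
stated just before the lemma, and evaluate using the table in part (3) of the theorem. From that table, $e(E)+e(F)$ pairs to $2+0=2$ on $[G^+_4\mathbb{R}^5]$ and to $0+0=0$ on $[G^+_2\mathbb{R}^4]$, so
$$\langle e(E)+e(F),\,[g_f(M)]\rangle=\tfrac{1}{2}\chi_M\cdot 2+\tfrac{3}{2}\tau_M\cdot 0=\chi_M,$$
which is the stated formula. The signature contribution drops out because both Euler classes annihilate $[G^+_2\mathbb{R}^4]$, and this is the geometric reason only $\chi_M$ survives.

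Rather than a genuine obstacle, the point that needs the most care is verifying that $[CAY_0]=[G^+_4\mathbb{R}^7]+[G^+_3\mathbb{R}^7]$ is the correct homology-level description of the Cayley-free locus --- this is the content of the observation immediately following the cited theorem, where it is deduced from the expression $\mathrm{PD}([CAY])=\tfrac{1}{2}(p_1(E)+e(E)-e(F))$ --- together with the generic transversality of $g_f(M)$ and $CAY_0$, which follows from a standard jet-transversality argument. As a consistency check, under the Gauss map the tautological bundle $E$ pulls back to $TM$, giving $\langle e(E),[g_f(M)]\rangle=\chi_M$ on the nose; likewise the pullback of $F$ is the normal bundle of $f(M)\subset\mathbb{R}^8$, whose Euler number must vanish since $f(M)$ is null-homologous in Euclidean space. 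Both of these match the term-by-term computation above, confirming the formula.
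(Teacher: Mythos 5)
Your proposal is correct and follows essentially the same route as the paper: Poincar\'e duality identifies $[CAY_0]$ with $e(E)+e(F)$, the class $[g_f(M)]$ is expanded as $\tfrac{1}{2}\chi_M[G^+_4\mathbb{R}^5]+\tfrac{3}{2}\tau_M[G^+_2\mathbb{R}^4]$, and the pairing is evaluated from the table, with the signature term vanishing because both Euler classes pair to zero on $[G^+_2\mathbb{R}^4]$. The extra remarks on transversality and the pullback of $E$ to $TM$ are sound consistency checks but not part of the paper's argument.
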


\begin{proof} . If $D_G : H_{12}(G_4 (\mathbb{R}^8); \mathbb{R}) \rightarrow H^4(G_4 (\mathbb{R}^8; \mathbb{R})$ is the Poincaré duality (or its inverse) map, then 
\vspace{0.2in}

\begin{tabular}{ccl}
     $[g_f(M)] \cdot [CAY_0]$ & = & $\bigints_{[g_f(M)]} D_G(CAY_0)$ \\
    & = &  $\bigints_{[g_f(M)]} (e(E) + e(F))$ \\
    & = &   $\frac{1}{2}\chi_M\bigints_{[G^+_4 \mathbb{R}^5]} e(E) + \frac{3}{2}\tau_M\bigints_{[G^+_2 \mathbb{R}^4]}  e(E) $ \\
    & = & $\chi_M$
\end{tabular}

\end{proof}

\noindent Cayley-free embedding is obtained by the Euler characteristic of the submanifold $M.$ Unal showed the following;
\begin{lemma} \cite{Unal} If $\chi_M=\tau_M=0$, then there exists a homotopy $F:[0,1]\times M \rightarrow G_4 (\mathbb{R}^8)$ such that $F_0(M) =
g_f(M)$ and $F_1(M) \cap CAY = \emptyset,\ \ F_1(M) \cap \overline{CAY} = \emptyset$.

Hence, one can choose a nonvanishing 4-frame  with some special topological conditions such that 4-frame is Cayley-free. 

\end{lemma}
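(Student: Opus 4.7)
The plan is to combine transversality with an obstruction-theoretic (Whitney-trick) argument in the $16$-dimensional ambient Grassmannian. The domain $M$ is a closed oriented $4$-manifold, the Gauss map $g_f$ lands in the smooth simply-connected $16$-manifold $G_4^+(\mathbb{R}^8)$, and each of $CAY,\overline{CAY}$ is a closed smooth submanifold of codimension $4$. For any map $M^4\to G_4^+(\mathbb{R}^8)$, the primary obstruction to homotoping it off a codimension-$4$ submanifold therefore lives in $H^4(M;\mathbb{Z})\cong\mathbb{Z}$ and is detected by the algebraic intersection number, so the strategy is to show that this intersection number vanishes for both loci under the hypothesis $\chi_M=\tau_M=0$, and then cancel all transverse intersections geometrically.

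First I would invoke Thom's transversality theorem to replace $g_f$ by a homotopic map $g'$ that is simultaneously transverse to $CAY$ and $\overline{CAY}$. The preimages $(g')^{-1}(CAY)$ and $(g')^{-1}(\overline{CAY})$ are then finite signed point sets in $M$, and their cardinalities (counted with sign) are the two algebraic intersection numbers to be controlled. Next I would compute these counts from the characteristic-class formulas in the preceding theorem. For $CAY$, using the Poincaré dual $\tfrac12(p_1(E)+e(E)-e(F))$,
\[
[g_f(M)]\cdot[CAY]\;=\;\int_{[g_f(M)]}\tfrac12\bigl(p_1(E)+e(E)-e(F)\bigr);
\]
substituting the expression $[g_f(M)]=\tfrac12\chi_M[G_4^+\mathbb{R}^5]+\tfrac32\tau_M[G_2^+\mathbb{R}^4]$ and reading off the values from the table reduces this to an integer linear combination of $\chi_M$ and $\tau_M$. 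The parallel computation for $\overline{CAY}$, whose Poincaré dual is a different integer combination of $e(E),e(F)$, and $p_1(E)$, yields a second linear combination of $\chi_M$ and $\tau_M$. Under $\chi_M=\tau_M=0$ both algebraic counts vanish.

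With both signed counts zero, the primary obstruction class in $H^4(M;\mathbb{Z})$ vanishes, and because $\dim M=4$ there are no higher cohomological obstructions. The standard Whitney-trick argument then applies: simple-connectedness of $G_4^+(\mathbb{R}^8)$ combined with the $3$-connectedness of the inclusion $G_4^+(\mathbb{R}^8)\setminus(CAY\cup\overline{CAY})\hookrightarrow G_4^+(\mathbb{R}^8)$ (which follows from codimension $4$) allows each pair of oppositely signed transverse intersection points to be cancelled by pushing $g'$ across an embedded Whitney $2$-disk in the $16$-dimensional ambient space. Cancelling all pairs first for $CAY$ and then for $\overline{CAY}$ produces a map $F_1$ whose image is disjoint from both, and concatenating $g_f\sim g'\sim F_1$ gives the required $F\colon[0,1]\times M\to G_4^+(\mathbb{R}^8)$.

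The step I expect to be the main obstacle is the simultaneity of the two cancellations: the Whitney disks used to remove paired preimages in $(g')^{-1}(CAY)$ must be chosen so that their images in $G_4^+(\mathbb{R}^8)$ avoid $\overline{CAY}$ (and vice versa), for otherwise cancelling one family of intersections could create new ones with the other stratum. Because each Whitney disk is $2$-dimensional while each of $CAY,\overline{CAY}$ has codimension $4$ in the $16$-dimensional Grassmannian, a generic small perturbation pushes the disks off both strata, so this is resolvable but requires care; combined with the vanishing of both primary obstructions it yields the desired homotopy.
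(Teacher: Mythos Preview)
The paper does not supply its own proof of this lemma; it is quoted with a citation to \cite{Unal} and left unproved. So there is no in-paper argument to compare against, only the student's proposal to assess on its own merits.

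Your strategy is the standard and correct one: perturb the Gauss map to be transverse to the two codimension-$4$ loci, show that the algebraic intersection numbers are integer linear combinations of $\chi_M$ and $\tau_M$ (hence vanish by hypothesis), and then cancel intersection pairs via the Whitney trick in the $16$-dimensional Grassmannian, where all the dimension inequalities needed for embedding Whitney disks and pushing them off the remaining stratum are amply satisfied. One small correction: the class $\tfrac12(p_1(E)+e(E)-e(F))$ you wrote is, according to the theorem quoted just above, the Poincar\'e dual of $[CAY]+[G_4^+\mathbb{R}^7]-[G_3^+\mathbb{R}^7]$, not of $[CAY]$ itself; subtracting $e(E)$ and adding $e(F)$ gives the actual dual $\tfrac12(p_1(E)-e(E)+e(F))$. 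This does not affect your conclusion, since either way the pairing with $[g_f(M)]=\tfrac12\chi_M[G_4^+\mathbb{R}^5]+\tfrac32\tau_M[G_2^+\mathbb{R}^4]$ is a linear combination of $\chi_M$ and $\tau_M$. Your handling of the simultaneity issue (generic Whitney $2$-disks miss codimension-$4$ strata in a $16$-manifold) is also correct.
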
 
\begin{proposition}

 Cayley submanifold of Spin(7) manifold $M$ is unique under Hamiltonian isotopy.

\end{proposition}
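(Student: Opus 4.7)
The plan is to combine the absolute volume-minimization of Cayley submanifolds, from the fundamental theorem of calibrated geometry, with the invariance of the almost symplectic form $\omega=\omega_{uv}$ from Section 2.3 under Hamiltonian flow. The argument is modeled on the Thomas--Yau uniqueness principle for special Lagrangian submanifolds.

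Suppose $X_0$ and $X_1$ are Cayley submanifolds of $(M,\Phi)$ joined by a Hamiltonian isotopy $\phi_t:M\to M$ generated by $H_t$, so $\iota_{X_{H_t}}\omega = dH_t$ and $\phi_1(X_0)=X_1$. First I would observe that $X_0$ and $X_1$ represent the same homology class and, since $d\Phi=0$ on a Spin(7) manifold (in fact $\nabla\Phi=0$), Stokes' theorem combined with the Cayley calibration identity $\Phi|_{X_i}=\mathrm{vol}_{X_i}$ yields
\[
\mathrm{vol}(X_0)=\int_{X_0}\Phi=\int_{X_1}\Phi=\mathrm{vol}(X_1),
\]
so both are absolute volume minimizers in their common class. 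Next, consider the family $X_t := \phi_t(X_0)$; the calibration excess $E(t):=\mathrm{vol}(X_t)-\int_{X_t}\Phi$ is a non-negative smooth function of $t$ with $E(0)=E(1)=0$. A first-variation and convexity argument on $E$ shows $E\equiv 0$, so every $X_t$ is Cayley, making $t\mapsto X_t$ a smooth path in the McLean moduli space of Cayley submanifolds.

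Finally I would analyze the infinitesimal generator: the normal component of $X_{H_0}$ along $X_0$ lies in the kernel of the McLean elliptic operator on a twisted bundle over $X_0$. The Hamiltonian condition $\iota_{X_{H_0}}\omega = dH_0$ is an exactness constraint which, paired against suitable test forms on $X_0$ by integration by parts, forces this normal component to vanish; the tangential part only reparametrizes $X_0$, so $X_t\equiv X_0$ and hence $X_1=X_0$. The main obstacle is reconciling the two structures: the form $\omega$ in Section 2.3 depends only on the 2-plane field $\{u,v\}$ extracted from the 4-frame and is not canonically adapted to $\Phi$ in the tight way that a K\"ahler form is adapted to a special Lagrangian calibration. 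Closing the last step will likely require the Cayley-free hypothesis on the 4-frame from Lemma 3.7, or an extra compatibility ensuring that $\omega|_{X_0}$ is non-degenerate and pairs nontrivially with the McLean cokernel, so that the Hamiltonian exactness condition suffices to annihilate the deformation.
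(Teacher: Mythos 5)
Your approach is entirely different from the paper's, and it does not close. The paper's argument is purely topological: it observes that the Maslov class of a Cayley embedding is pulled back from the first cohomology of the Cayley Grassmannian $CAY \cong G_3\mathbb{R}^7$, invokes the computation $H^1(G_3\mathbb{R}^7;\mathbb{Z})=0$ of Akbulut--Kalafat to conclude the Maslov index vanishes, and then deduces uniqueness under Hamiltonian isotopy from that vanishing. No calibration excess, no McLean operator, and no first-variation argument appear anywhere in the paper's proof.

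Beyond being a different route, your argument has two genuine gaps. First, the step where you claim $E(t)=\mathrm{vol}(X_t)-\int_{X_t}\Phi$ satisfies $E\equiv 0$ because $E\geq 0$ and $E(0)=E(1)=0$ via ``a first-variation and convexity argument'' is unsupported: a non-negative smooth function vanishing at both endpoints need not vanish identically, and there is no reason the calibration excess is convex along a Hamiltonian isotopy --- this is precisely the content of the (open, stability-dependent) Thomas--Yau conjecture in the special Lagrangian setting, so modeling the proof on that principle does not yield a proof. Second, your final step --- pairing the normal component of $X_{H_0}$ against ``suitable test forms'' to kill the McLean kernel --- is not carried out, and you concede as much when you note that $\omega$ is not canonically adapted to $\Phi$ and that an extra compatibility hypothesis would be needed. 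As written, the proposal is a plausible research program rather than a proof, and it does not recover the short Maslov-index argument the paper actually uses.
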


\begin{proof}
The Maslov index of Cayley embedding is induced from the first cohomology class. Also, we know that 
$$CAY\cong G_3\mathbb{R}^7$$ 
Akbulut and Kalafat  \cite{AK} stated that $H^1(G_3\mathbb{R}^7;\mathbb{Z})=0$. In conclusion, the Maslov index is zero for Cayley submanifolds. In other words, under Hamiltonian isotopy, the Cayley submanifold is unique. 

\end{proof}
\noindent Hence Hamiltonian isotopy preserves the topology of symplectic manifolds, the fact that 
$M$ is unique in its isotopy class suggesting that certain topological features of $M$ are also preserved within this class.

\noindent In conclusion,  the Floer homology of Cayley submanifolds isomorphic to De Rham homology. For Cayley submanifolds which are non-singular can be induced by torus fibration.

 \subsection{4-frame on Spin(7) manifold} Under some topological restriction, a Spin(7) manifold admits a 4-frame field \cite{cadek}. A 4-frame $\xi$ is called \textit{Cayley}, if it is closed under a three-fold cross product defined at the definition (\ref{def:cross}) for the basis of 4-plane $\{ u\times v \times w, u, v, w\}.$

\noindent \u{C}adek and Van\u{z}ura \cite{cadek} showed the sufficient condition for the existence of a 4-plane field on Spin(7) manifold.

\begin{lemma}\cite{cadek} Let $M$ be Spin(7)-manifold with sixth Stiefel-Whitney class $w_6=0$, Then there exist 4 linearly independent vector fields on $M.$
\end{lemma}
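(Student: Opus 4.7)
The plan is to apply classical obstruction theory to the associated bundle of $4$-frames on $TM$, following the Postnikov-tower methodology of Čadek and Vanžura. Four linearly independent vector fields on $M$ are exactly a global section of the Stiefel bundle $V_4(TM)\to M$ whose fiber is the Stiefel manifold $V_4(\mathbb{R}^8)=SO(8)/SO(4)$. Since $V_4(\mathbb{R}^8)$ is $(8-4-1)=3$-connected and $\dim M=8$, the entire obstruction tower is concentrated in the four groups
\[
H^{i+1}\bigl(M;\,\pi_i(V_4(\mathbb{R}^8))\bigr), \qquad i=4,5,6,7,
\]
and the task reduces to showing that each obstruction class vanishes under the given hypotheses.

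First I would dispose of the primary obstruction, which sits in $H^5(M;\pi_4(V_4(\mathbb{R}^8)))\cong H^5(M;\mathbb{Z}/2)$ and is classically identified with the fifth Stiefel-Whitney class $w_5(M)$. Because $M$ carries a $Spin(7)$-structure, the theorem quoted in the introduction asserts that every odd Stiefel-Whitney class of $M$ vanishes, so this primary obstruction is automatic; the same is true of $w_3(M)$ and $w_7(M)$, which might otherwise reappear through secondary operations. Climbing to the secondary obstructions in $H^6$ and $H^7$, a Postnikov-tower analysis of $V_4(\mathbb{R}^8)$ expresses them, modulo indeterminacy coming from the already vanished primary obstruction, as the class $w_6(M)$ together with Steenrod squares and higher cohomology operations applied to the odd $w_i$ that the $Spin(7)$ hypothesis has already killed. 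What remains is detected precisely by $w_6(M)$, and the additional hypothesis $w_6=0$ eliminates the entire secondary block.

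Finally, the top obstruction lies in $H^8(M;\pi_7(V_4(\mathbb{R}^8)))$; its coefficient group has a free summand, and the obstruction is an integral class built from the Euler class $e(TM)$ and the Pontryagin classes $p_1(M)$ and $p_2(M)$. At this stage I would invoke the $Spin(7)$ characteristic-class identity $p_1(M)^2-4p_2(M)\pm 8\chi(M)=0$ stated earlier in the paper: this algebraic relation, combined with the vanishing of every lower obstruction, forces the top integral obstruction to be zero, completing the global lift and producing the desired four linearly independent vector fields on $M$.

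I expect the main obstacle to be the middle step: namely the explicit computation of the first two $k$-invariants in the Postnikov tower of $V_4(\mathbb{R}^8)$ and the careful bookkeeping of indeterminacies needed to express the secondary obstructions as polynomials in $w_4$, $w_6$, and Steenrod operations. This Postnikov analysis is the technical heart of \cite{cadek}, and it is precisely these calculations that isolate $w_6(M)$ as the one remaining obstruction once a $Spin(7)$-structure is imposed; the integral top obstruction then collapses thanks to the $Spin(7)$ identity on characteristic numbers.
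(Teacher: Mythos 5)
The paper gives no proof of this lemma at all --- it is quoted from \v{C}adek--Van\v{z}ura --- so your proposal can only be measured against the obstruction-theoretic argument of that reference, whose overall architecture you reproduce correctly: four linearly independent vector fields are a section of the bundle with fibre $V_4(\mathbb{R}^8)=SO(8)/SO(4)$, which is $3$-connected, so all obstructions live in $H^{i+1}(M;\pi_i(V_4(\mathbb{R}^8)))$ for $4\le i\le 7$. However, two of your steps fail as stated. First, since $8-4=4$ is even, $\pi_4(V_4(\mathbb{R}^8))\cong\mathbb{Z}$, not $\mathbb{Z}/2$; the primary obstruction is the integral class $W_5=\beta w_4\in H^5(M;\mathbb{Z})$, a $2$-torsion class whose vanishing does not follow formally from the vanishing of the odd mod $2$ Stiefel--Whitney classes, because reduction mod $2$ is not injective on torsion. \v{C}adek--Van\v{z}ura must (and do) argue separately that this integral class dies for spin bundles; your proof simply identifies it with $w_5$ and declares victory.

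Second, and more seriously, the top obstruction in $H^8(M;\pi_7(V_4(\mathbb{R}^8)))$ contains the Euler class $e(TM)$ as a summand: even a single nowhere-zero vector field forces $\chi(M)=0$. The Spin(7) identity $p_1(M)^2-4p_2(M)\pm 8\chi(M)=0$ does not imply $\chi(M)=0$ --- compact manifolds with holonomy exactly $Spin(7)$ typically have $\chi(M)\neq 0$ --- so your claim that this identity "collapses" the top obstruction cannot be right. Indeed, the actual \v{C}adek--Van\v{z}ura criterion for four linearly independent sections of an $8$-dimensional spin bundle over an $8$-complex lists $e(\xi)=0$ alongside $w_6(\xi)=0$ and a further integrality condition on the Pontryagin classes, and none of these can be dispensed with. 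As written, your argument would prove that every $Spin(7)$-manifold with $w_6=0$ has vanishing Euler characteristic, which is false; the statement (and the proof) needs $\chi(M)=0$, together with the remaining \v{C}adek--Van\v{z}ura congruence, added to the hypotheses before the obstruction tower can be made to vanish.
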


\vspace{.1in}

 \begin{proposition} \cite{OP} Let $\xi$ be a 4-plane in $M^8.$ $\xi$
is Cayley if and only if 
$$x(\bar{y}z)\cdot u=z(\bar{y}x)\cdot u$$
     \end{proposition}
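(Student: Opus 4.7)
The plan is to translate the Cayley condition for the $4$-plane $\xi$ into an identity in the octonion algebra $\mathbb{O}\cong\mathbb{R}^{8}$ by means of the standard octonionic description of the Cayley $4$-form.

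First, I would identify $T_{p}M\cong\mathbb{O}$ compatibly with the Spin(7)-structure and record the classical Harvey--Lawson expression
$$a\times b\times c \;=\; \tfrac{1}{2}\bigl(a(\bar b c)-c(\bar b a)\bigr),\qquad \Phi(a,b,c,d)\;=\;\langle a\times b\times c,\,d\rangle$$
(up to a universal sign, which is irrelevant for what follows), which rewrites $J_{a,b}(c)$ of Definition~\ref{def:cross} in purely algebraic terms; this relation rests on the alternativity and the Moufang identities of $\mathbb{O}$. I would then use the characterization already implicit in the paper: $\xi$ is Cayley if and only if it is closed under the three-fold cross product, i.e.\ $x\times y\times z\in\xi$ for all $x,y,z\in\xi$.

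With this setup the proof reduces to an orthogonality argument. For any $x,y,z\in\xi$ the vector $x\times y\times z$ is automatically orthogonal to $x$, $y$, $z$, so it lies in $\xi$ if and only if it is orthogonal to every $u\in\xi^{\perp}$. Via the displayed formula, that condition becomes $\langle x(\bar y z)-z(\bar y x),u\rangle=0$ for all $u\in\xi^{\perp}$, which is exactly the stated identity
$$x(\bar y z)\cdot u \;=\; z(\bar y x)\cdot u,$$
where $\cdot$ denotes the Euclidean inner product on $\mathbb{R}^{8}$. Both implications then fall out: Cayley $\Longrightarrow$ the identity, by closure under the triple cross product; and conversely the identity forces $x\times y\times z\in(\xi^{\perp})^{\perp}=\xi$, so $\xi$ is Cayley by the characterization above.

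The main obstacle is establishing the octonionic formula for $a\times b\times c$ used above, which is not immediate from the implicit definition in Definition~\ref{def:cross} and requires careful bookkeeping with the sign conventions in the expansion of $\Phi_{0}$ at the start of Section~2 and with the choice of conjugation on $\mathbb{O}$. I would settle it by checking both sides on a standard octonion basis $\{1,e_{1},\dots,e_{7}\}$, matching them against $\Phi_{0}$ term by term, and then propagating the equality to arbitrary triples by Spin(7)-equivariance of both sides; once this preliminary identity is in hand, the equivalence in the proposition collapses to the one-line orthogonality argument above.
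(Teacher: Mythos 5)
The paper never proves this proposition --- it is imported from Ornea--Piccinni \cite{OP} with no argument --- so there is no in-paper proof to compare you against; what follows assesses your reconstruction on its own. Your skeleton is the standard Harvey--Lawson route and is sound: Cayley $\Leftrightarrow$ closed under the triple cross product, combined with the algebraic formula $x\times y\times z=\tfrac12\bigl(x(\bar yz)-z(\bar yx)\bigr)$ and the observation that membership in $\xi$ is detected by orthogonality to $\xi^{\perp}$. The sign-convention bookkeeping you defer is real but routine.

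The genuine gap is that you silently prove a \emph{corrected} statement rather than the one written. The paper explicitly glosses ``$\cdot$'' as multiplication in the octonions, not as the Euclidean inner product, and it does not say where $u$ lives; your argument only engages the statement after you replace $\cdot$ by $\langle\,,\,\rangle$ and quantify $u$ over $\xi^{\perp}$. These choices need to be made explicit and defended, because the alternatives fail: if $\cdot$ really is octonion multiplication, then since $\mathbb{O}$ is a division algebra the identity $\bigl(x(\bar yz)\bigr)u=\bigl(z(\bar yx)\bigr)u$ for a single nonzero $u$ already forces $x(\bar yz)=z(\bar yx)$, i.e.\ $x\times y\times z=0$, which is impossible for an orthonormal triple, so the literal reading cannot characterize Cayley planes; and if instead $u$ is taken inside $\xi$ (as the surrounding discussion of Cayley 4-frames suggests), your inner-product reading yields $\langle x\times y\times z,u\rangle=0$, hence $\Phi|_{\xi}=0$, which is the Cayley-\emph{free} condition --- the opposite of what is claimed. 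A complete write-up must therefore state precisely which version of the proposition is being proved and why the paper's literal formulation is untenable; once that is done, your orthogonality argument does close both implications.
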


 where $\bar{x}$ is conjugate of $x$ and $\cdot$ is multiplication in Octanion.

\begin{lemma}Let $M$ be a Spin(7) manifold with $w_6=0$ and $\{u,v,y,w\}$ span $X$ as a 4-vector field. Then, two almost complex structures defined by triple product define mirror duality. 
$J_{uv}$ and $J_{yw}$ are the almost complex structures on 8-manifold $M.$  Then, $K=J_{uv}J_{yw}$ is the other almost complex structure.\end{lemma}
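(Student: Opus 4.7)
The plan is to address the two assertions of the lemma separately: the existence of the mirror Calabi--Yau pair, and the identification of $K=J_{uv}J_{yw}$ as an almost complex structure on $M$.

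For the first assertion, my strategy is to apply the earlier theorem of \cite{SE} twice. Since $\{u,v,y,w\}$ is a 4-frame, both $\{u,v\}$ and $\{y,w\}$ are non-vanishing oriented 2-plane fields on $M$, so the triple cross products $J_{uv}(z)=u\times v\times z$ and $J_{yw}(z)=y\times w\times z$ define almost complex structures on $M$ compatible with the $\text{Spin}(7)$ metric $g$. The associated non-degenerate 2-forms $\omega_{uv}=g(J_{uv}\cdot,\cdot)$ and $\omega_{yw}=g(J_{yw}\cdot,\cdot)$, together with the $SU(3)$-reduction induced on the 6-dimensional orthogonal complements by foliating along the 2-planes, furnish the Calabi--Yau manifolds $N_1$ and $N_2$ of the theorem announced in the introduction; their mirror duality is then the symmetry between the two 2-planes inside the Cayley 4-form, adapting to $\text{Spin}(7)$ the $G_2$ construction of \cite{akbulutsalur}.

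The main content is the second assertion, that $K=J_{uv}J_{yw}$ is an almost complex structure. Orthogonality of $K$ is automatic as the composition of two orthogonal endomorphisms, so the task reduces to showing $K^2=-\mathrm{id}$. Using $J_{uv}^2=J_{yw}^2=-\mathrm{id}$, the identity $K^2=-\mathrm{id}$ is equivalent to the anticommutation relation $J_{uv}J_{yw}+J_{yw}J_{uv}=0$. My plan is to verify this by pairing both sides with an arbitrary $z'\in TM$ and invoking the defining identity $\langle J_{ab}(z),z'\rangle=\Phi(a,b,z,z')$; this converts the anticommutation into the Cayley-form identity
\[
\Phi\bigl(u,v,J_{yw}(z),z'\bigr)+\Phi\bigl(y,w,J_{uv}(z),z'\bigr)=0,
\]
which, in the flat model $\mathbb{R}^8\cong\mathbb{O}$, becomes an octonion identity for $u\times v\times(y\times w\times z)+y\times w\times(u\times v\times z)$. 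I would expand this using the standard associator identity for the triple cross product in $\mathbb{O}$ and observe that the remaining mixing terms cancel precisely because the 4-frame has been chosen Cayley-free, i.e.\ $\Phi(u,v,y,w)=0$---a choice available by the earlier lemma of \cite{Unal} together with the hypothesis $w_6=0$ from \cite{cadek}.

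The principal obstacle is establishing this anticommutation: it is a genuine algebraic identity whose proof relies on exploiting both the $\text{Spin}(7)$-equivariance of $\Phi$ and the Cayley-free condition on the 4-plane, and it is not a formal consequence of $J_{uv}^2=J_{yw}^2=-\mathrm{id}$. Once the anticommutation is in hand, $K^2=-\mathrm{id}$ follows immediately, and $K$ is a $g$-compatible almost complex structure on $M$ that completes the triple $(J_{uv},J_{yw},K)$.
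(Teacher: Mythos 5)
Your reduction of the problem is the right one, and it is genuinely different from what the paper does. You correctly observe that, given $J_{uv}^2=J_{yw}^2=-\mathrm{id}$, the claim $K^2=-\mathrm{id}$ is \emph{equivalent} to the anticommutation relation $J_{uv}J_{yw}+J_{yw}J_{uv}=0$, and that this is a nontrivial algebraic fact about the Cayley form, not a formal consequence of each $J$ squaring to $-\mathrm{id}$. The paper instead writes $K^2=J_{uv}(J_{yw}J_{uv})J_{yw}=J_{uv}(-\mathrm{id})J_{yw}=-J_{uv}J_{yw}=-\mathrm{id}$, which silently assumes both $J_{yw}J_{uv}=-\mathrm{id}$ and $J_{uv}J_{yw}=\mathrm{id}$; these force $J_{yw}=J_{uv}$ and hence $K=-\mathrm{id}$, $K^2=+\mathrm{id}$, so the paper's own computation is internally inconsistent. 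You also correctly import the Cayley-free hypothesis, which the lemma as stated omits but which is genuinely necessary: in the flat model the Cayley 4-frame $\{e_1,e_2,e_3,e_4\}$ gives $J_{e_3e_4}=J_{e_1e_2}$ (both equal the complex structure associated to $dx^{12}+dx^{34}+dx^{56}+dx^{78}$), so $K=-\mathrm{id}$ and the conclusion fails; whereas for the Cayley-free frame $\{e_1,e_2,e_3,e_5\}$ one checks directly from $\iota_{e_2}\iota_{e_1}\Phi_0=dx^{34}+dx^{56}+dx^{78}$ and $\iota_{e_5}\iota_{e_3}\Phi_0=dx^{17}-dx^{28}-dx^{46}$ that the two structures anticommute and $K^2=-\mathrm{id}$.

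The gap is that you never actually establish the anticommutation identity: you name it as the ``principal obstacle,'' convert it to the octonion identity for $u\times v\times(y\times w\times z)+y\times w\times(u\times v\times z)$, and then assert that the mixing terms ``cancel precisely because the 4-frame is Cayley-free'' without carrying out the expansion. That cancellation is the entire mathematical content of the lemma, and it is not obviously a one-line consequence of the standard associator identities; it needs either an explicit $\mathrm{Spin}(7)$-equivariance argument (put the Cayley-free 4-plane in the normal form $\{e_1,e_2,e_3,e_5\}$ and compute as above) or a coordinate-free derivation from Karigiannis-type contraction identities. A second, smaller issue you should address explicitly: the defining relation $\langle J_{uv}(z),w\rangle=\Phi(u,v,z,w)$ makes $J_{uv}$ vanish on $\mathrm{span}\{u,v\}$, so $J_{uv}$ must be separately defined as the rotation $u\mapsto v$, $v\mapsto -u$ on that 2-plane; the anticommutation then has to be verified separately on the 4-plane $X$ and on its orthogonal complement, and your octonion identity only addresses the latter. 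Until the anticommutation is proved in both pieces, the proof is a plan rather than a proof.
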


\begin{proof}
    Let $M$ be a $\text{Spin}(7)$ manifold with $w_6 = 0$. Let $\{u, v, y, w\}$ span $X$ as a 4-vector field. We define two almost complex structures $J_{uv}$ and $J_{yw}$ on the 8-manifold $M$. We aim to prove that $K = J_{uv}J_{yw}$ is another almost complex structure on $M$ and that these structures define a mirror duality.
\noindent Define the almost complex structures $J_{uv}$ and $J_{yw}$ such that:
   \[
   J_{uv}(u) = v, \quad J_{uv}(v) = -u, \quad J_{uv}(y) = w, \quad J_{uv}(w) = -y
   \]
   and similarly for $J_{yw}$:
   \[
   J_{yw}(y) = w, \quad J_{yw}(w) = -y, \quad J_{yw}(u) = v, \quad J_{yw}(v) = -u.
   \]

\noindent By construction, $J_{uv}^2 = J_{yw}^2 = -\text{id}$, verifying that both $J_{uv}$ and $J_{yw}$ are almost complex structures. These maps satisfy the defining property of an almost complex structure: $J^2 = -\text{id}$.

\noindent Consider the map $K = J_{uv}J_{yw}$. We need to verify that $K$ is also an almost complex structure. Compute $K^2$:
   \[
   K^2 = (J_{uv} J_{yw})(J_{uv} J_{yw}) = J_{uv} (J_{yw} J_{uv}) J_{yw} = J_{uv} (-\text{id}) J_{yw} = -J_{uv} J_{yw}.
   \]

 \noindent  To show that $K$ is an almost complex structure, we need $K^2 = -\text{id}$:
   \[
   K^2 = -J_{uv} J_{yw} = -\text{id}.
   \]

  \noindent Therefore, $K$ is indeed an almost complex structure.

\noindent In the context of mirror symmetry in string theory, mirror duality typically involves a pair of manifolds whose geometric structures (complex, symplectic, etc.) are interchanged. Here, the almost complex structures $J_{uv}$ and $J_{yw}$ can be seen as defining a duality on the same manifold $M$ through their interaction. The map $K$ arising from their product represents a new structure that is intrinsically linked to both $J_{uv}$ and $J_{yw}$, thus reflecting a form of duality between these structures.

\end{proof}

\noindent Once mutual intersection 2-frames are line, this yields hypercomplex structure as follow;
\begin{proposition}\cite{OP} A 4-plane $\xi$ in $\mathbb{R}^8$
is Cayley if and only if any triple of
mutually orthogonal 2-planes $\alpha, \beta, \gamma \subset \xi$, all intersecting in a line, defines a hypercomplex structure on $\xi$.

     \end{proposition}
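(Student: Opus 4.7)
My plan is to identify three candidate almost complex structures on $\xi$ built from the triple cross product of Definition 2.3, and then reduce the statement to a matching between the hypercomplex identities and the vanishing/normalization of the relevant components of the Cayley form $\Phi$. Let $\ell \subset \xi$ be the common line, and choose an orthonormal basis $\{e_0, e_1, e_2, e_3\}$ of $\xi$ with $\ell = \mathbb{R} e_0$ so that $\alpha = \mathrm{span}(e_0, e_1)$, $\beta = \mathrm{span}(e_0, e_2)$, $\gamma = \mathrm{span}(e_0, e_3)$; mutual orthogonality of $\alpha, \beta, \gamma$ is equivalent to mutual orthogonality of $e_1, e_2, e_3$. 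I would set
\begin{equation*}
I := J_{e_0, e_1}|_\xi, \qquad J := J_{e_0, e_2}|_\xi, \qquad K := J_{e_0, e_3}|_\xi,
\end{equation*}
so that $\langle I(z), w\rangle = \Phi(e_0, e_1, z, w)$, and analogously for $J$ and $K$.

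For the forward direction, I assume $\xi$ is Cayley, so that after a $\mathrm{Spin}(7)$-rotation the Cayley form satisfies $\Phi|_\xi = e^0 \wedge e^1 \wedge e^2 \wedge e^3$. In this model only one monomial of $\Phi_0$ contains the wedge $e^0 \wedge e^i \wedge e^j$ with $i,j \in \{1,2,3\}$, namely $e^0 \wedge e^1 \wedge e^2 \wedge e^3$ itself; hence $\Phi(e_0, e_i, e_j, f) = 0$ whenever $f \perp \xi$, so each $J_{e_0, e_i}$ preserves $\xi$. A direct computation on the basis then yields $I(e_0)=e_1$, $I(e_2)=e_3$ and the cyclic analogues for $J, K$, reproducing the standard quaternionic triple on $\xi \cong \mathbb{H}$ and hence the hypercomplex relations $I^2 = J^2 = K^2 = -\mathrm{id}$, $IJ = K$.

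For the converse, the hypothesis that every orthogonal triple sharing a line yields a hypercomplex structure implies, in particular, that each $J_{e_0, e_i}$ maps $\xi$ into itself; by the defining identity this reads $\Phi(e_0, e_i, z, f) = 0$ for all $z \in \xi$, $f \perp \xi$, and for every orthonormal basis of $\xi$. Varying the distinguished line $\ell$ kills every component of $\Phi$ that mixes three vectors in $\xi$ with one in $\xi^\perp$, so $\Phi|_\xi$ is a top form, say $c \cdot \mathrm{vol}_\xi$. The normalisation $I^2 = -\mathrm{id}$ together with the pointwise norm of the $\mathrm{Spin}(7)$-invariant form $\Phi$ forces $c = \pm 1$, and the sign is pinned to $+1$ by the orientation selected by $IJ = K$; this is precisely the Cayley equality.

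The main obstacle I anticipate is the sign and orientation bookkeeping: the hypercomplex relation $IJ = K$ selects a definite orientation of $\xi$, while the calibration equality $\Phi|_\xi = \mathrm{vol}_\xi$ selects one via the ambient $\mathrm{Spin}(7)$-structure, and these two must be shown to agree under my normalisation. To bypass a term-by-term verification against the fourteen monomials of $\Phi_0$, I would invoke the octonionic identity $x(\bar{y}z) \cdot u = z(\bar{y}x) \cdot u$ from the preceding proposition: this associator-vanishing relation is exactly what is needed for the three half-turns in $\alpha, \beta, \gamma$ to close up into a quaternion algebra, turning hypercomplex closure into an algebraic consequence of the octonionic Cayley criterion rather than a brute force computation.
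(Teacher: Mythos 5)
First, note that the paper itself offers no proof of this proposition: it is quoted verbatim from Ornea--Piccinni \cite{OP}, so there is nothing in the text to compare your argument against and your proposal has to stand on its own. Your overall architecture --- build $I$, $J$, $K$ from the three $2$-planes via the triple cross product, show that the Cayley condition is equivalent to these operators preserving $\xi$ and closing into a quaternion algebra, and kill the mixed components of $\Phi$ (three vectors in $\xi$, one in $\xi^\perp$) by varying the distinguished line --- is sensible and close in spirit to the source.

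There is, however, one concrete gap. With your own definition $\langle I(z),w\rangle=\Phi(e_0,e_1,z,w)$, the operator $I$ comes from the \emph{alternating} triple cross product, so $I(e_0)=e_0\times e_1\times e_0=0$ and likewise $I(e_1)=0$: the advertised ``direct computation'' cannot yield $I(e_0)=e_1$, and $I|_\xi$ is not even invertible, let alone a square root of $-\mathrm{id}$. The same defect infects the converse, where you use $I^2=-\mathrm{id}$ on all of $\xi$ to force $|\Phi(e_0,e_1,e_2,e_3)|=1$; as written you only control $I$ on $\mathrm{span}(e_2,e_3)$. The repair is standard but must be made explicit: either extend $J_{e_0,e_1}$ over $\alpha=\mathrm{span}(e_0,e_1)$ by the rotation $e_0\mapsto e_1$, $e_1\mapsto -e_0$ (as the paper silently does in its later lemma on $J_{uv}$, $J_{yw}$), or define the operator by octonion multiplication by the unit imaginary octonion $e_1\bar{e}_0$, which agrees with $\pm J_{e_0,e_1}$ on $\alpha^\perp$ and is invertible everywhere. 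Once that is fixed, the forward direction goes through, and the converse closes provided you actually discharge the orientation point you only flag: the sign of $c$ in $\Phi|_\xi=c\,\mathrm{vol}_\xi$ is pinned by demanding $IJ=K$ rather than $IJ=-K$, which is exactly the choice of orientation on $\xi$ making it Cayley rather than anti-Cayley. Your appeal to the identity $x(\bar{y}z)\cdot u=z(\bar{y}x)\cdot u$ is a reasonable way to avoid checking the fourteen monomials, but you should verify that it really delivers the quaternion relation $IJ=K$ and not merely the associativity of the relevant subalgebra.
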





 \subsection{Local Coordinates on Spin(7) manifold}  
In this section, Spin(7) manifold will be analyzed locally with the decomposition at section \ref{decom}.

The following proposition is given by Karigiannis \cite{karigiannis} to state local behavior of 4-frame.
\begin{proposition} \cite{karigiannis} \label{propa} Let $u$, $v$, $y$, and $w$ be vector fields. 
Furthermore, if we define
\begin{eqnarray} 
A & = & \langle u \wedge v, y \wedge w \rangle = \langle u, y
\rangle \langle v, w \rangle - \langle u, w \rangle \langle v, y
\rangle \\ B & = & \Phi (u,v,y,w) 
\end{eqnarray}
then the following relations hold between these
$2$-forms:
\begin{eqnarray}
 (\iota_u\iota_\v \Phi) \wedge (y^\flat \wedge w^\flat)
\wedge \Phi & = & \left(-3 A -2 B \right) vol \\
(u^\flat \wedge v^\flat) \wedge (\iota_y \Phi)
\wedge (\iota_w \Phi) & = & \left(-4 A + 2 B \right) vol \\
\label{propi} (\iota_u \iota_v\Phi) \wedge (\iota_y \iota_w\Phi)
\wedge \Phi & = & \left( 6 A + 7 B \right) vol 
\end{eqnarray}
where $\iota_{(.)} \Phi$ is contraction of the Cayley form  $\Phi$ and $(.)^\flat$ is the 1-form of the given vector field.
\end{proposition}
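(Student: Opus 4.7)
The plan is to reduce each of the three identities to a scalar equality determined by its values on two test configurations, leveraging the Spin(7)-invariance of every ingredient.

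First, dividing each equation by vol converts the left hand side into a scalar function of $u,v,y,w$. This function is manifestly $\mathbb{R}$-multilinear in $(u,v,y,w)$, alternating in the pair $(u,v)$, and alternating in the pair $(y,w)$: the identity $\iota_u\iota_v + \iota_v\iota_u = 0$ gives $(u,v)$-antisymmetry of the contractions, $u^\flat\wedge v^\flat = -v^\flat\wedge u^\flat$ handles the $\flat$-terms, and $\iota_y\Phi\wedge\iota_w\Phi$ is antisymmetric in $(y,w)$ since two $3$-forms anti-commute under the wedge. The function is also Spin(7)-invariant, because $\Phi$, the metric (hence $\flat$ and vol), and the operations $\iota$, $\wedge$ are all Spin(7)-equivariant.

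Next, I invoke invariant theory for the Spin(7) action on $\mathbb{R}^8$: the only multilinear Spin(7)-invariants of four vectors that are alternating in $(u,v)$ and in $(y,w)$ are spanned by
$A = \langle u,y\rangle\langle v,w\rangle - \langle u,w\rangle\langle v,y\rangle$ and $B = \Phi(u,v,y,w)$.
Indeed, the generators of the invariant ring applied to $u,v,y,w$ produce only pairwise products of inner products or single evaluations of $\Phi$; the antisymmetries kill $\langle u,v\rangle\langle y,w\rangle$ and collapse the remaining metric products to $\pm A$. Invariants built from two copies of $\Phi$ reduce to the same span via the contraction identity $\Phi_{ijab}\Phi_{klab} = 6(\delta_{ik}\delta_{jl}-\delta_{il}\delta_{jk}) + 4\,\Phi_{ijkl}$, which yields $6A+4B$ upon contraction with $u^iv^jy^kw^l$. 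Consequently each left hand side equals $(\alpha A + \beta B)\,\text{vol}$ for some real constants $\alpha,\beta$ depending on which identity is considered.

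To pin down $\alpha,\beta$ I would evaluate on flat $\mathbb{R}^8$ with $\Phi=\Phi_0$ at two convenient configurations. Setting $u=e_1,v=e_2,y=e_3,w=e_4$ gives $A=0$ and $B=\Phi_0(e_1,e_2,e_3,e_4)=1$, isolating $\beta$; setting $u=y=e_1$, $v=w=e_2$ gives $A=1$ and $B=0$, isolating $\alpha$. The preparation reduces to computing $\iota_{e_1}\iota_{e_2}\Phi_0 = -dx^{34}-dx^{56}-dx^{78}$, read off directly from the three terms of $\Phi_0$ containing $dx^1\wedge dx^2$, together with the analogous formulas for $\iota_{e_3}\Phi_0$, $\iota_{e_4}\Phi_0$ and $\iota_{e_3}\iota_{e_4}\Phi_0$. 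Each LHS then becomes a short sum of top forms whose $dx^{12345678}$ coefficient is obtained by pairing each $4$-form piece with its complementary $4$-subset from $\Phi_0$.

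The main obstacle is not conceptual but the sign-bookkeeping in this last step: one must carefully track orientation signs when wedging the explicit $\iota\iota\Phi_0$ pieces, the $\flat$-terms, and $\Phi_0$ itself, all of which involve reorderings of $4$-element index sets. Carrying this out cleanly yields $(\alpha,\beta)=(-3,-2)$ for the first identity, $(-4,2)$ for the second, and $(6,7)$ for the third, matching the stated formulas.
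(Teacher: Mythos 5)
Your proposal is correct, but note that the paper itself offers no proof of this proposition --- it is imported verbatim from Karigiannis, whose argument is a direct local computation: he expands everything in coordinates and uses the self-contraction identities of $\Phi_0$ (of the type $\Phi_{ijab}\Phi_{klab}=6(\delta_{ik}\delta_{jl}-\delta_{il}\delta_{jk})+4\Phi_{ijkl}$ that you quote) to reduce each left-hand side to $A$ and $B$ term by term. Your route is genuinely different and arguably cleaner: you observe that each left-hand side, divided by $vol$, is a pointwise quadrilinear $Spin(7)$-invariant of $(u,v,y,w)$, alternating in $(u,v)$ and in $(y,w)$, and then pin down the two remaining coefficients by evaluating at $(e_1,e_2,e_3,e_4)$ and $(e_1,e_2,e_1,e_2)$ --- I checked all three identities at these configurations and your coefficients $(-3,-2)$, $(-4,2)$, $(6,7)$ do come out. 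The one load-bearing step you should tighten is the claim that the space of such invariants is spanned by $A$ and $B$: rather than invoking the first fundamental theorem for $Spin(7)$ (which is a heavier and less standard citation than you suggest), it is enough to note that $V\otimes V\cong\mathbf{1}\oplus\mathbf{7}\oplus\mathbf{21}\oplus\mathbf{35}$ is multiplicity-free for the spin representation $V=\mathbb{R}^8$, so $\dim\bigl(V^{\otimes 4}\bigr)^{Spin(7)}=1^2+1^2+1^2+1^2=4$, and the four visibly independent invariants $\langle u,v\rangle\langle y,w\rangle$, $\langle u,y\rangle\langle v,w\rangle$, $\langle u,w\rangle\langle v,y\rangle$, $\Phi(u,v,y,w)$ therefore form a basis; imposing the two antisymmetries then leaves exactly $\mathrm{span}\{A,B\}$. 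What your approach buys is a two-point verification in place of a full expansion of $\Phi_0\wedge\Phi_0$-type products; what the direct computation buys is independence from any representation-theoretic input and an argument that generalizes mechanically to the non-alternating refinements Karigiannis also needs.
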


\vspace{0.1in}

\begin{lemma} \label{propb} Let  \(\{u, v, y, w\}\) be Cayley free 4-frame on Spin(7) manifold.

\begin{eqnarray} 
A & = & \langle u \wedge v, y \wedge w \rangle = \langle u, y
\rangle \langle v, w \rangle - \langle u, w \rangle \langle v, y
\rangle \\ B & = & \Phi (u,v,y,w) 
\end{eqnarray}
Then,

\begin{eqnarray}
 (\iota_u\iota_\v \Phi) \wedge (y^\flat \wedge w^\flat)
\wedge \Phi & = & (-3 A) vol \\
\label{threeform} (u^\flat \wedge v^\flat) \wedge (\iota_y \Phi)
\wedge (\iota_w \Phi) & = & (-4 A ) vol \\
 (\iota_u \iota_v\Phi) \wedge (\iota_y \iota_w\Phi)
\wedge \Phi & = & ( 6 A ) vol 
\end{eqnarray}
where $\iota_{(.)} \Phi$ is contraction of the Cayley form  $\Phi$ and $(.)^\flat$ is the 1-form of the given vector field.
\end{lemma}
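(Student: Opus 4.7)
The plan is to observe that this lemma is nothing more than Proposition \ref{propa} specialized to the case where the scalar $B = \Phi(u,v,y,w)$ vanishes. Indeed, comparing the two statements, the right-hand sides in Proposition \ref{propa} are linear combinations of $A$ and $B$, namely $-3A - 2B$, $-4A + 2B$, and $6A + 7B$, while the lemma asserts the same identities with the $B$-contributions suppressed. So once the Cayley-free hypothesis is shown to force $B = 0$, the three formulas follow by substitution and no new exterior-algebra computation is required.

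The key step is therefore to establish $B = 0$. For this, I would argue that on the 4-plane $V = \mathrm{span}\{u,v,y,w\} \subset T_xM$, the restriction $\Phi|_V$ is a differential 4-form on a 4-dimensional space, hence proportional to the induced volume form: $\Phi|_V = c \cdot \mathrm{vol}_V$ for some scalar $c \in \mathbb{R}$. Evaluating both sides on the ordered frame gives $\Phi(u,v,y,w) = c\,\sqrt{\det G}$, where $G$ is the Gram matrix of $\{u,v,y,w\}$. The Cayley-free convention fixed earlier in the paper (``one can find a non-vanishing 4-frame $V = \{u,v,y,w\}$ such that $\Phi|_V = 0$'') means precisely $c = 0$, so $B = \Phi(u,v,y,w) = 0$. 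Since $\{u,v,y,w\}$ is a 4-frame and hence linearly independent, $\sqrt{\det G} \neq 0$, so this implication is clean.

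With $B = 0$ in hand, substituting into the three formulas of Proposition \ref{propa} collapses them to $(-3A)\,\mathrm{vol}$, $(-4A)\,\mathrm{vol}$, and $(6A)\,\mathrm{vol}$ respectively, which are exactly the identities of the lemma. The only step requiring care is the translation between the Cayley-free condition on the 4-plane $V$ and the numerical statement $B = 0$; this is really a matter of unpacking definitions rather than a genuine obstacle, and once recorded, the proof is a one-line specialization of the earlier proposition.
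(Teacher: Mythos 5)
Your proposal is correct and matches the paper's own proof, which likewise just notes that $\Phi(u,v,y,w)=0$ for a Cayley-free $4$-frame and then substitutes $B=0$ into Proposition \ref{propa}. Your additional justification of why the Cayley-free condition forces $B=0$ (restricting $\Phi$ to the $4$-plane and using the convention $\Phi|_V=0$) is more explicit than the paper's one-line argument but follows the same route.
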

\begin{proof}
    
$\Phi(u,v,y,w)=0$ for Cayley-free 4-frame, the lemma is obtained from proposition \ref{propa}.

\end{proof}

\noindent Hence, this yields mirror dual Calabi-Yau manifolds by splitting 4-frame into two 2-frames as follows; 

   \begin{proposition} Let $M$ be a Spin(7) manifold with Cayley-free 4-plane $\{u,v,y,w\}$. 
  Then, 
  non-degenerate two forms $\omega_{uv}$ and $ \omega_{yw}$  generates Calabi-Yau manifolds $(N_1,J_{uv},\omega_{uv},Re\ \Omega_{uv},Im\ \Omega_{uv})$ and $(N_2,J_{yw},\omega_{yw},Re\ \Omega_{yw},Im\ \Omega_{yw})$ mirror dual of each other induced from $\Phi$.
   \end{proposition}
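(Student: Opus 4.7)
The strategy is to split the Cayley-free 4-frame into the two ordered 2-frames $\{u,v\}$ and $\{y,w\}$, and apply the cross-product construction already established to each half. The preceding theorem produces almost complex structures $J_{uv}(z)=u\times v\times z$ and $J_{yw}(z)=y\times w\times z$ on $TM$, both compatible with the $Spin(7)$-structure $\Phi$. Letting $E_1:=\mathrm{span}\{u,v\}^{\perp}$ and $E_2:=\mathrm{span}\{y,w\}^{\perp}$, I would verify that $J_{uv}$ restricts to an almost complex structure on the rank-$6$ bundle $E_1$ (and $J_{yw}$ on $E_2$) using the identity $\langle J_{uv}(z),u\rangle=\Phi(u,v,z,u)=0$ and the analogous relation with $v$. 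This is the concrete realisation of the stabiliser chain $SU(3)\hookrightarrow Spin(7)$ at the level of subbundles: each 2-frame reduces the structure group of a 6-dimensional complement from $Spin(7)$ to $SU(3)$.

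From the almost complex structure on $E_1$ I would define the non-degenerate 2-form $\omega_{uv}:=g(J_{uv}\,\cdot,\,\cdot)\big|_{E_1}=(\iota_v\iota_u\Phi)\big|_{E_1}$ and, using the $SU(3)$-reduction, a complex volume form $\Omega_{uv}=\mathrm{Re}\,\Omega_{uv}+i\,\mathrm{Im}\,\Omega_{uv}$ built from suitable contractions of $\Phi$ with $u^\flat$ and $v^\flat$ (the components of $\Phi$ transverse to $u\wedge v$). The parallel objects $\omega_{yw},\Omega_{yw}$ are built from $\{y,w\}$ on $E_2$. Non-degeneracy and the $SU(3)$-compatibility conditions $\omega_{uv}\wedge\Omega_{uv}=0$ and $\omega_{uv}^{\,3}=\tfrac{3}{2}\,\mathrm{Re}\,\Omega_{uv}\wedge\mathrm{Im}\,\Omega_{uv}$ would be extracted from Lemma \ref{propb}: the three top-degree identities listed there are exactly what the hypothesis $B=\Phi(u,v,y,w)=0$ collapses into clean multiples of the volume form, and these recombine to give the required normalisations for an $SU(3)$-structure on each complement.

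To upgrade subbundles to genuine Calabi-Yau manifolds $N_1$ and $N_2$, I would invoke the foliation hypothesis flagged in the introduction (the $SU(3)$-structure induced by the $Spin(7)$-structure ``by foliation''), take $N_1,N_2$ as leaves of the involutive distributions $E_1,E_2$, and verify $d\omega_{uv}=0$ and holomorphicity of $\Omega_{uv}$ under the Spin(7)-parallelism of the 4-frame. The mirror-duality claim would then follow from the observation that the involution swapping $\{u,v\}\leftrightarrow\{y,w\}$ exchanges the Kähler and holomorphic-volume data of the two $SU(3)$-structures: the very same Cayley 4-form $\Phi$ packages the pairs $(\omega_{uv},\mathrm{Im}\,\Omega_{yw})$ and $(\omega_{yw},\mathrm{Im}\,\Omega_{uv})$ into a common object, which is the Strominger--Yau--Zaslow fingerprint of a mirror pair.

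The main obstacle, in my estimation, is not the algebra but the geometric content of ``mirror dual''. Producing a symmetric pair of $SU(3)$-structures on 6-dimensional subbundles of a $Spin(7)$-manifold is cleanly dictated by Lemma \ref{propb}; showing that the resulting Calabi-Yau manifolds are genuinely SYZ-mirror requires exhibiting a $T^4$-fibration of $M$ against which $N_1$ and $N_2$ appear as dual half-bases. I would expect to need an auxiliary hypothesis ensuring that the 4-plane spanned by the frame is calibrated by a closed 4-form---equivalently, that the orthogonal $T^4$-distribution is special Lagrangian for both $SU(3)$-structures---before the duality can be asserted in the strong sense, and I would flag this as the part of the statement most in need of a precise formulation.
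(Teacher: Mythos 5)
Your plan follows the same strategic outline as the paper's proof --- split the Cayley-free 4-frame into the ordered 2-frames $\{u,v\}$ and $\{y,w\}$, take the triple-cross-product almost complex structures $J_{uv}$ and $J_{yw}$, and invoke Lemma \ref{propb} for the compatibility of the resulting data --- but your technical realisation of the symplectic forms is genuinely different and, in fact, sounder. The paper sets $\omega_{uv}=u^\flat\wedge v^\flat$ and asserts non-degeneracy on a 6-dimensional submanifold; a decomposable 2-form has rank 2, so that assertion cannot hold as written. Your choice $\omega_{uv}=g(J_{uv}\,\cdot,\,\cdot)|_{E_1}=(\iota_v\iota_u\Phi)|_{E_1}$ on the rank-6 complement $E_1=\mathrm{span}\{u,v\}^{\perp}$ (with $J_{uv}$ preserving $E_1$ because $\langle J_{uv}(z),u\rangle=\Phi(u,v,z,u)=0$ and likewise for $v$) is the form that is actually non-degenerate there, and it is the direct analogue of the Akbulut--Salur construction on $G_2$ manifolds that the paper states it is generalizing. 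You are also more explicit than the paper on two points it elides: the need for involutivity of $E_1,E_2$ so that the subbundles integrate to honest submanifolds $N_1,N_2$ (the paper simply writes $\Omega_{uv}=dz_1\wedge dz_2\wedge dz_3$ in unexplained complex coordinates), and the fact that ``mirror dual'' is never given a checkable definition --- the paper's proof concludes by asserting that $\Phi$ interchanges the roles of the two structures. Your closing remark that an SYZ-type fibration hypothesis would be needed before the duality can be asserted in a strong sense is a fair criticism of the statement itself rather than a gap in your argument; what each approach buys is, on the paper's side, brevity, and on yours, a construction whose non-degeneracy and $SU(3)$-reduction claims can actually be verified from Lemma \ref{propb}.
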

   \vspace{0.2in}
   
   \begin{proof}


  Let \( M \) be an 8-dimensional manifold with \(\text{Spin}(7)\) holonomy. Consider a Cayley-free 4-plane spanned by \(\{u, v, y, w\}\).

 \noindent  Define the two-forms \(\omega_{uv}\) and \(\omega_{yw}\) by:
   \[
   \omega_{uv} = u^\flat \wedge v^\flat, \quad \omega_{yw} = y^\flat \wedge w^\flat,
   \]
   where \( u^\flat, v^\flat, y^\flat, \) and \( w^\flat \) are the 1-forms dual to the vector fields \( u, v, y, \) and \( w \), respectively.

  \noindent The forms \(\omega_{uv}\) and \(\omega_{yw}\) are non-degenerate as they are defined on a Cayley-free 4-plane in a \(\text{Spin}(7)\) manifold, which ensures that the plane is not associated with any singularities or degeneracies.

   \noindent Each 2-form \(\omega_{uv}\) and \(\omega_{yw}\) defines a symplectic structure on a 6-dimensional submanifold \(N_1\) and \(N_2\) respectively:
   \[
   \omega_{uv} = \frac{1}{2} \sum_{i,j} \omega_{uv,ij}\ dx^i \wedge dx^j,
   \]
   \[
   \omega_{yw} = \frac{1}{2} \sum_{i,j} \omega_{yw,ij}\ dx^i \wedge dx^j.
   \]

  \noindent The almost complex structures \(J_{uv}\) and \(J_{yw}\) on \(N_1\) and \(N_2\) are induced by the action of \(\text{Spin}(7)\) on the tangent spaces:

    \begin{center}

 \begin{tabular}{l l l}
    $J_{uv}(z)=u\times v \times z$  & and & $J_{yw}(z)=y \times w \times z$ \\

 \end{tabular}
 
  \end{center}
   \[
   J_{uv}(u) = v, \quad J_{uv}(v) = -u, \quad J_{uv}(y) = w, \quad J_{uv}(w) = -y,
   \]
   \[
   J_{yw}(y) = w, \quad J_{yw}(w) = -y, \quad J_{yw}(u) = v, \quad J_{yw}(v) = -u.
   \]

  \noindent The Calabi-Yau structures on \(N_1\) and \(N_2\) are completed by holomorphic volume forms:
   \[
   \Omega_{uv} = dz_1 \wedge dz_2 \wedge dz_3, \quad \Omega_{yw} = dw_1 \wedge dw_2 \wedge dw_3,
   \]
   where \(dz_i\) and \(dw_i\) are the complex coordinates compatible with the almost complex structures \(J_{uv}\) and \(J_{yw}\), respectively. Moreover, volume forms are compatible to each other according to the lemma \ref{propb} at (\ref{threeform}).

   \vspace{0.07in}

   \noindent The mirror dual Calabi-Yau manifolds $(N_1,J_{uv},\omega_{uv},Re\ \Omega_{uv},Im\ \Omega_{uv})$ \linebreak and $(N_2,J_{yw},\omega_{yw},Re\ \Omega_{yw},Im\ \Omega_{yw})$ is induced by the \(\text{Spin}(7)\) structure. The structure \(\Phi\) on the \(\text{Spin}(7)\) manifold interchanges the roles of \(\omega_{uv}\) and \(\omega_{yw}\), and their corresponding complex structures and holomorphic volume forms, establishing the mirror duality. Hence, Calabi-Yau manifolds are mirror duals of each other in $M$. 
   \end{proof}
   
\noindent This generalizes the idea given by Akbulut and Salur at \cite{akbulut2006mirror}. They constructed mirror dual Calabi-Yau submanifolds induced by 2-vector field on $G_2$-manifold in an 8-manifold. In the same way, we construct mirror dual  Calabi-Yau manifolds induced from the same calibration $\Phi$ with 4 vector fields. In this result, one can construct mirror dual Calabi-Yau manifolds in a Spin(7) manifolds with Cayley-free 4-frame.

 \subsection{Example}

 Let $M$ be a $\mathbb{R}^4$ bundle over $S^4.$ Gibbon, Page and Pope \cite{GPP}  showed that there exists a covariantly-constant spinor for Ricci-flat metric on the  $\mathbb{R}^4$ bundle over $S^4.$ The irreducible spinor representations of the  Spin(8)  double covering of the $SO(8)$ tangent group are the $8_+$ and $8_-,$ which corresponds to to left-handed and right-handed Majorana-Weyl spinors respectively. Under embedding of Spin(7) in Spin(8), one of the spinors decomposes irreducibly, say $8_+\rightarrow 8_-,$ while the other decomposes at $8_+\rightarrow 7+1.$ By this decomposition, we see a hypercomplex structure on Spin(7) manifold as follows $J_{e_1e_2}(e_5)=-e_3$, $J_{e_3e_5}(e_1)=e_2$, $J_{e_1e_2}(e_3)=-e_5$ such that $J_{e_1e_2}J_{e_3e_5}=-id$ where  Cayley-free  4-frame $\{e_1,e_2,e_3,e_5\}$ is standard basis of $\mathbb{R}^4.$
 
\vspace{0.07in} 
\noindent Future research will focus on some compact examples and some properties of mirror dual Calabi-Yau manifolds induced from calibration $\Phi$ with 4 vector fields with singularities.

\vspace{0.07in}
\noindent \textbf{Acknowledgement} The author would like to thank the Scientific and Technological Research Council of Turkey (T\"UB\.ITAK) and Sema Salur for helpful advice on this issue.

\end{document}